\definecolor{lblue}{RGB}{0,110,152}
\definecolor{dred}{RGB}{171,67,53}
\newtheorem{theorem}{Theorem}
\newtheorem{corollary}[theorem]{Corollary}
\newtheorem{proposition}[theorem]{Proposition}
\newtheorem{lemma}[theorem]{Lemma}
\newtheorem{define}[theorem]{Definition}
\DeclareMathOperator*{\col}{col}
\DeclareMathOperator*{\diag}{diag}
\DeclareMathOperator*{\rank}{rank}
\DeclareMathOperator{\E}{ \mathbb{E}}
\def\E{\mathbb{E}}
\providecommand\X[1]{\boldsymbol{X_{#1}}}
\providecommand\Z[1]{\boldsymbol{Z_{#1}}}
\providecommand\phib{\boldsymbol{\emptyset}}
\def\llongrightarrow{\relbar\joinrel\relbar\joinrel\relbar\joinrel\rightarrow}
\providecommand{\rarrow}[1]{\stackrel{#1}{\llongrightarrow}}
\def\Xz{\boldsymbol{X}}
\def\Zz{\boldsymbol{Z}}
\providecommand\X[1]{\boldsymbol{X_{#1}}}
\providecommand\Z[1]{\boldsymbol{Z_{#1}}}
\providecommand\phib{\boldsymbol{\emptyset}}
\newenvironment{proof}{{\it Proof :~}}{\hfill$\diamondsuit$\\}
\begin{document}



\title{Robust and structural ergodicity analysis of stochastic biomolecular networks involving synthetic antithetic integral controllers\thanks{This paper is the expanded version of the paper of the same name that will appear in the proceedings of the 20th IFAC World Congress.}}

\author{Corentin Briat and Mustafa Khammash\thanks{Corentin Briat and Mustafa Khammash are with the Department of Biosystems Science and Engineering, ETH-Z\"{u}rich, Switzerland; email: mustafa.khammash@bsse.ethz.ch, corentin.briat@bsse.ethz.ch, corentin@briat.info; url: https://www.bsse.ethz.ch/ctsb/, http://www.briat.info.}}

\date{}

\maketitle


\begin{abstract}
\noindent The concepts of ergodicity and output controllability have been shown to be fundamental for the analysis and synthetic design of closed-loop stochastic reaction networks, as exemplified by the use of antithetic integral feedback controllers. In [Gupta, Briat \& Khammash, PLoS Comput. Biol., 2014], some ergodicity and output controllability conditions for unimolecular and certain classes of bimolecular reaction networks were obtained and formulated through linear programs. To account for context dependence, these conditions were later extended in [Briat \& Khammash, CDC, 2016] to reaction networks with uncertain rate parameters using simple and tractable, yet potentially conservative, methods. Here we develop some exact theoretical methods for verifying, in a robust setting, the original ergodicity and output controllability conditions based on algebraic and polynomial techniques. Some examples are given for illustration.
%
\end{abstract}

\section{Introduction}

The main objective of synthetic biology is the rational and systematic design of biological networks that can achieve de-novo functions such as the heterologous production of a metabolite of interest \cite{Ro:06}. Besides the obvious necessity of developing experimental methodologies allowing for the reliable implementation of synthetic networks, tailored theoretical and computational tools for their design, their analysis and their simulation also need to developed. Indeed, theoretical tools that could predict certain properties (e.g. a stable/oscillatory/switching behavior, controllable trajectories, etc.) of a synthetic biological network from an associated model formulated, for instance, in terms of a reaction network \cite{Feinberg:72,Horn:72,Goutsias:13}, could pave the way to the development of iterative procedures for the systematic design of efficient synthetic biological networks. Such an approach would allow for a faster design procedure than those involving fastidious experimental steps, and would give insights on how to adapt the current design in order to improve a certain design criterion. This way, synthetic biology would become conceptually much closer to existing theoretically-driven engineering disciplines, such as control engineering.
\begin{figure}[h]
\centering
  \includegraphics[width=0.8\textwidth]{./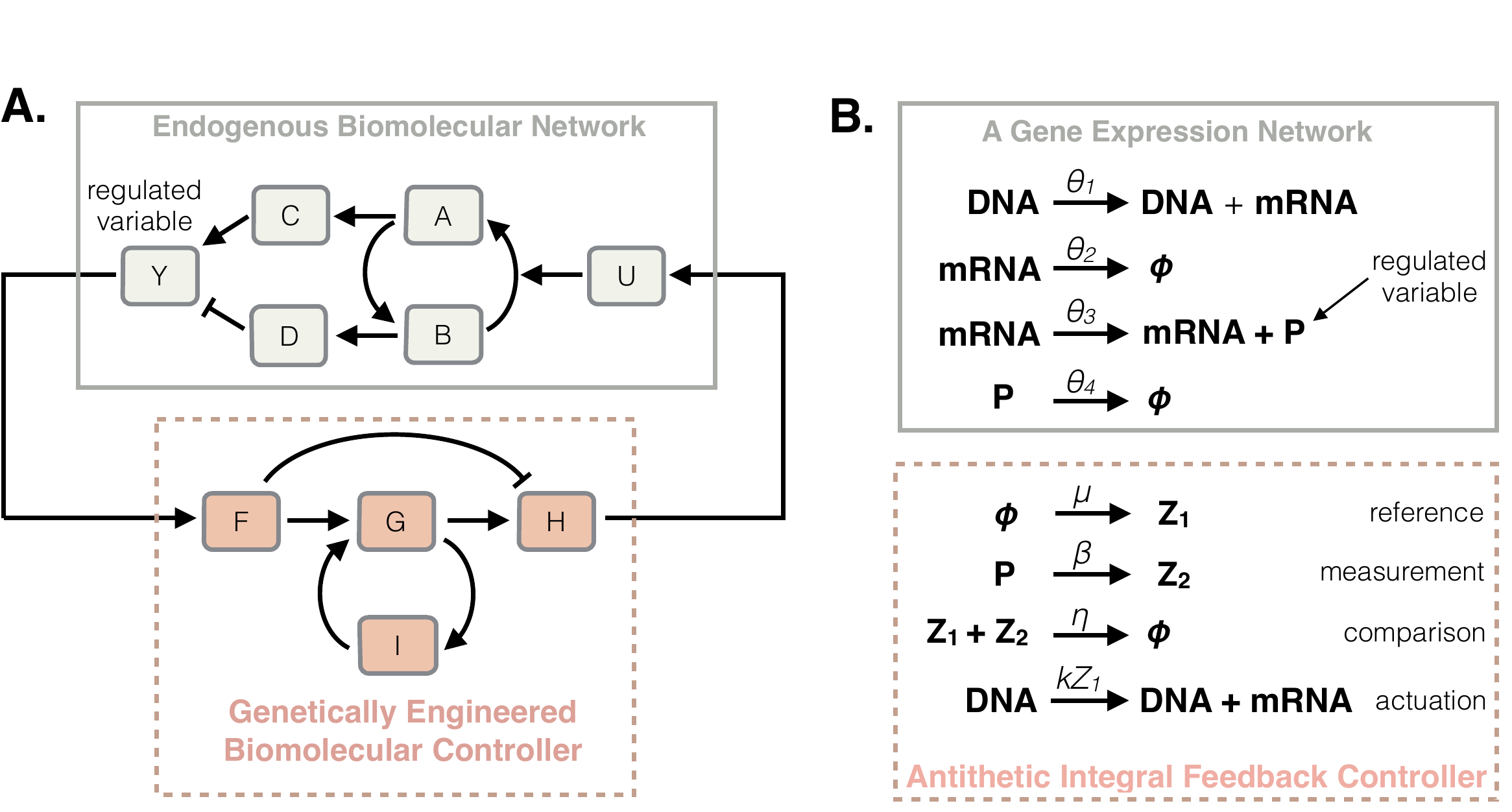}
  \caption{\textbf{A.} A synthetic feedback loop involving an endogenous network controlled with a synthetic feedback controller. \textbf{B.} A gene expression network (top) and an antithetic integral controller (bottom) as examples of endogenous and synthetic networks.}\label{fig}
\end{figure}
However, while such methods are well-developed for deterministic models (i.e. deterministic reaction networks), they still lag behind in the stochastic setting. This lack of tools is quite problematic since it is now well-known that stochastic reaction networks \cite{Goutsias:13} are versatile modeling tools that can capture the inherent stochastic behavior of living cells \cite{Thattai:01,Elowitz:02} and can exhibit several interesting properties that are absent for their deterministic counterparts \cite{Vilar:02,Paulsson:00,Briat:15e,Gupta:16}. Under the well-mixed assumption, it is known \cite{Anderson:11,Anderson:15} that such random dynamics can be well represented as a continuous-time jump Markov process evolving on the $d$-dimensional nonnegative integer lattice where $d$ is the number of distinct molecular species involved in the network. Sufficient conditions for checking the ergodicity of open unimolecular and bimolecular stochastic reaction networks has have been proposed in \cite{Briat:13i} and formulated in terms of linear programs. The concept of ergodicity is of fundamental importance as it can serve as a basis for the development of a control theory for biological systems. Indeed, verifying the ergodicity of a control system, consisting for instance of an endogenous biomolecular network controlled by a synthetic controller (see Fig.~\ref{fig}\textbf{A}), would prove that the closed-loop system is well-behaved (e.g. ergodic with bounded first- and second-order moments) and that the designed control system achieves its goal (e.g. set-point tracking and perfect adaptation properties). This  procedure is analogous to that of checking the stability of a closed-loop system in the deterministic setting; see e.g. \cite{DelVecchio:16}. Additionally, designing synthetic circuits  achieving a given function that are provably ergodic could allow for the rational design of synthetic networks that can exploit noise in their function. A recent example is that of the antithetic integral feedback controller proposed in \cite{Briat:15e} (see also Fig.~\ref{fig}\textbf{B}) that has been shown to induce an ergodic closed-loop network when some conditions on the endogenous network to be controlled are met.

A major limitation of the ergodicity conditions obtained in \cite{Briat:13i,Briat:15e} is that they only apply to networks with fixed and known rate parameters -- an assumption that is rarely met in practice as the rate parameters are usually poorly known and context dependent. This has motivated the consideration of networks with uncertain rate parameters in \cite{Briat:16cdc} using two different approaches. The first one is quantitative and considers networks having an interval matrix as characteristic matrix \cite{Moore:09}. It was notably shown that checking the ergodicity and the output controllability of those networks reduces to checking the Hurwitz stability of a single matrix and the output controllability of a single positive linear system. It was also shown that these conditions exactly write as a simple linear program having the same complexity as the program associated with the nominal case; i.e. in the case of constant and fixed rate parameters. The second approach is qualitative and is based on the theory of sign-matrices \cite{Jeffries:77,Brualdi:95} which has been extensively studied and considered for the qualitative analysis of dynamical systems. Sign-matrices have also been considered in the context of reaction networks, albeit much more sporadically; see e.g. \cite{Helton:09,Helton:10,Briat:14c,Briat:16cdc,Giordano:16}. In this case, again, the conditions obtained in \cite{Briat:14c,Briat:16cdc} can be stated as a very simple linear program that can be shown to be equivalent to some graph theoretical conditions.

However, these approaches can be very conservative when the entries of the characteristic matrix of the network are not independent -- a situation that appears when conversion reactions are involved in the networks. In order to solve this problem, the actual parameter dependence needs to be exactly captured and many approaches exist to attack this problem such as, to cite a few, $\mu$-analysis \cite{Doyle:82a,Packard:93}, small-gain methods \cite{Packard:93,Khammash:93}, eigenvalue and perturbation methods \cite{Michiels:07bk,Seyranian:03}, interval matrices \cite{Moore:09,Briat:16cdc}, sign-matrices \cite{Helton:09,Helton:10,Briat:14c,Briat:16cdc,Giordano:16}, Lyapunov methods \cite{Briat:book1,Blanchini:99,Angeli:09,Blanchini:14}, etc.

As some of the methods just cited above can be conservative or may yield too complex conditions, we propose to develop an approach that is tailored to our problem by exploiting its inherent properties. Several conditions for the robust ergodicity of unimolecular and biomolecular networks are first obtained in terms of a sign switching property for the determinant of the upper-bound of the characteristic matrix of the network. This condition also alternatively formulates as the existence of a positive vector depending polynomially on the uncertain parameters and satisfying certain inequality conditions. The complexity of the problem is notably reduced by exploiting the Metzler structure\footnote{A matrix is Metzler if its off-diagonal elements are nonnegative.} of the matrices involved and through the use of various algebraic results such as the Perron-Frobenius theorem. The structural ergodicity of unimolecular networks is also considered and shown to reduce to the analysis of constant matrices when some certain realistic assumptions are met. It is notably shown in the examples that this latter result can be applied to bimolecular networks in some situations. The examples also illustrate that the proposed approach can be used to establish the robust or structural ergodicity of reaction networks for which the methods proposed in \cite{Briat:16cdc} fail. 

\textbf{Outline.} Preliminaries on reaction networks, ergodicity analysis and antithetic integral control are given in Section \ref{sec:prel}. Section \ref{sec:rob_ergdocity} is devoted to the robust ergodicity analysis of unimolecular and bimolecular reactions networks while the problem of establishing the structural ergodicity of unimolecular reaction networks is addressed in Section \ref{sec:struct_ergdocity}. Examples are finally treated in Section \ref{sec:examples}.

\textbf{Notations.} The standard basis for $\mathbb{R}^d$ is denoted by $\{e_i\}_{i=1}^d$. The sets of integers, nonnegative integers, nonnegative real numbers and  positive real numbers are denoted by $\mathbb{Z}$, $\mathbb{Z}_{\ge0}$, $\mathbb{R}_{\ge0}$ and $\mathbb{R}_{>0}$, respectively. The $d$-dimensional vector of ones is denoted by $\mathds{1}_d$ (the index will be dropped when the dimension is obvious). For vectors and matrices, the inequality signs $\le$ and $<$ act componentwise. Finally, the vector or matrix obtained by stacking the elements $x_1,\ldots,x_d$ is denoted by $\col_{i=1}^d(x_i)$ or $\col(x_1,\ldots,x_d)$. The diagonal operator $\diag(\cdot)$ is defined analogously. The spectral radius of a matrix $M\in\mathbb{R}^{n\times n}$ is defined as $\varrho(M)=\max\{|\lambda|:\det(\lambda I-M)=0\}$.

\section{Preliminaries on reaction networks}\label{sec:prel}

\subsection{Reaction networks}\label{sec:RN}

We consider here a reaction network with $d$ molecular species $\X{1},\ldots,\X{d}$ that interacts through $K$ reaction channels $\mathcal{R}_1,\ldots,\mathcal{R}_K$ defined as
\begin{equation}
 \mathcal{R}_k:\ \sum_{i=1}^d\zeta_{k,i}^l\X{i}\rarrow{\rho_k}  \sum_{i=1}^d\zeta_{k,i}^r\X{i},\ k=1,\ldots,K
\end{equation}
where $\rho_k\in\mathbb{R}_{>0}$ is the reaction rate parameter and $\zeta_{k,i}^l,\zeta_{k,i}^r\in\mathbb{Z}_{\ge0}$. Each reaction is additionally described by a stoichiometric vector and a propensity function. The stoichiometric vector of reaction  $\mathcal{R}_k$ is given by $\zeta_k:=\zeta_k^r-\zeta_k^l\in\mathbb{Z}^d$ where $\zeta_k^r=\col(\zeta_{k,1}^r,\ldots,\zeta_{k,d}^r)$ and $\zeta_k^l=\col(\zeta_{k,1}^l,\ldots,\zeta_{k,d}^l)$. In this regard, when the reaction $\mathcal{R}_k$ fires, the state jumps from $x$ to $x+\zeta_k$. We define the stoichiometry matrix $S\in\mathbb{Z}^{d\times K}$ as $S:=\begin{bmatrix}
  \zeta_1\ldots\zeta_K
\end{bmatrix}$. When  the kinetics is  mass-action, the propensity function of reaction $\mathcal{R}_k$ is given by  $\textstyle\lambda_k(x)=\rho_k\prod_{i=1}^d\frac{x_i!}{(x_i-\zeta_{k,i}^l)!}$ and is such that  $\lambda_k(x)=0$ if $x\in\mathbb{Z}_{\ge0}^d$ and $x+\zeta_k\notin\mathbb{Z}_{\ge0}^d$. We denote this reaction network by $(\Xz,\mathcal{R})$. Under the well-mixed assumption, this network can be described by a continuous-time Markov process $(X_1(t),\ldots,X_d(t))_{t\ge0}$ with state-space $\mathbb{Z}_{\ge0}^d$; see e.g. \cite{Anderson:11}.

\subsection{Ergodicity of unimolecular and bimolecular reaction networks}

Let us assume here that the network  $(\Xz,\mathcal{R})$ is at most bimolecular and that the reaction rates are all independent of each other. In such a case, the propensity functions are polynomials of at most degree 2 and we can write the propensity vector as
\begin{equation}
  \lambda(x)=\begin{bmatrix}
w_0(\rho_0)\\
W(\rho_u)x\\
Y(\rho_b,x)
  \end{bmatrix}
\end{equation}
where $w_0(\rho_0)\in\mathbb{R}^{n_0}_{\ge0}$, $W(\rho_u)x\in\mathbb{R}^{n_u}_{\ge0}$ and $Y(\rho_b,x)\in\mathbb{R}^{n_b}_{\ge0}$ are the propensity vectors associated the zeroth-, first- and second-order reactions, respectively. Their respective rate parameters are also given by $\rho_0$, $\rho_u$ and $\rho_b$, and according to this structure, the stoichiometric matrix is decomposed as
 $S=:\begin{bmatrix}S_0 &
  S_u & S_b
\end{bmatrix}$. Before stating the main results of the section, we need to introduce the following terminology:
\begin{define}\label{def:Ab}
  The \emph{characteristic matrix} $A(\rho_u)$ and the \emph{offset vector} $b_0(\rho)$ of a bimolecular reaction network $(\Xz,\mathcal{R})$ are defined as
  \begin{equation}
    A(\rho_u):=S_uW(\rho_u)\ \textnormal{and}\ b_0(\rho_0):=S_0w_0(\rho_0).
  \end{equation}
\end{define}
A particularity is that the matrix $A(\rho_u)$ is Metzler (i.e. all the off-diagonal elements are nonnegative) for all $\rho_u\ge0$. This property plays an essential role in the derivation of the results of \cite{Briat:15e} and will also be essential for the derivation of the main results of this paper. It is also important to define the property of ergodicity:
\begin{define}[\cite{Meyn:09}]
  The Markov process associated with the reaction network $(\Xz,\mathcal{R})$ is said to be ergodic if its probability distribution globally converges to a unique stationary distribution. It is exponentially ergodic if the convergence to the unique stationary distribution is exponential.
\end{define}
We then have the following result:
\begin{theorem}[\cite{Briat:13i}]
  Let us consider an irreducible\footnote{Computationally tractable conditions for checking the irreducibility of reaction networks are provided in \cite{Gupta:13}.} bimolecular reaction network $(\Xz,\mathcal{R})$ with fixed rate parameters; i.e. $A=A(\rho_u)$ and $b_0=b_0(\rho_0)$. Assume that there exists a vector $v\in\mathbb{R}^d_{>0}$ such that $v^TS_b=0$ and $v^TA<0$. Then, the  reaction network $(\Xz,\mathcal{R})$ is exponentially ergodic and all the moments are bounded and converging.
\end{theorem}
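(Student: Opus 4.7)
The plan is to exhibit a linear Foster--Lyapunov function and invoke the standard geometric drift criterion for continuous-time Markov chains recorded in Meyn and Tweedie. The natural candidate is $V(x) := v^\T x$, where $v$ is the positive vector supplied by the hypotheses. First I would compute the action of the infinitesimal generator $\mathcal{A}$ of the Markov process on $V$: since $(\mathcal{A}V)(x) = \sum_{k=1}^K \lambda_k(x)\,v^\T\zeta_k = v^\T S\lambda(x)$, the decomposition $S=[S_0\ S_u\ S_b]$ together with $\lambda(x)=\col(w_0,Wx,Y(x))$ yields $(\mathcal{A}V)(x) = v^\T b_0 + v^\T A x + v^\T S_b Y(x)$. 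The assumption $v^\T S_b = 0$ then annihilates the bimolecular (quadratic) contribution, leaving an expression that is affine in $x$.

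Next I would exploit $v^\T A < 0$: componentwise strict negativity of $v^\T A$ provides a constant $c>0$ with $v^\T A x \le -c\,\mathds{1}^\T x$ on $\mathbb{R}^d_{\ge 0}$, and positivity of $v$ in turn gives $\mathds{1}^\T x \ge V(x)/\max_i v_i$. Combining these two estimates one obtains a classical drift inequality of the form $(\mathcal{A}V)(x) \le -\tilde c\,V(x) + v^\T b_0$ for some $\tilde c > 0$. Consequently $(\mathcal{A}V)(x) \le -(\tilde c/2)\,V(x)$ outside a finite set $C$, and together with the irreducibility of the chain, which makes every finite set petite, this is precisely the geometric drift condition recorded in \cite{Meyn:09}. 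It delivers exponential ergodicity and, in particular, boundedness and convergence of $\E[V(X(t))]$, hence of the first moment of the state.

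To treat higher-order moments I would iterate the argument with the Lyapunov functions $V_n(x) := (v^\T x)^n$. The key observation is that $v^\T S_b = 0$ forces $v^\T \zeta_k = 0$ for every bimolecular channel $k$, so those reactions leave $V_n$ invariant and drop out of the generator entirely. Expanding the surviving terms with the binomial theorem shows that $(\mathcal{A}V_n)(x)$ is a polynomial in $x$ of total degree $n$ whose leading part is $n(v^\T x)^{n-1}(v^\T b_0 + v^\T A x)$, and the strict negativity of $v^\T A$ makes this leading part dominate the lower-order remainder outside a finite set. A geometric drift condition for $V_n$ then follows, and induction on $n$ yields boundedness and convergence of all moments. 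The one genuinely delicate step is verifying, uniformly for large $n$, that the lower-degree polynomial remainder in $(\mathcal{A}V_n)(x)$ produced by the zeroth- and first-order reactions is absorbed by the leading negative term; this is a routine polynomial estimate once the bimolecular contribution has been removed, and is precisely where the assumption $v^\T S_b = 0$ is indispensable, for otherwise the bimolecular reactions would inject terms of degree $n+1$ into $(\mathcal{A}V_n)(x)$ and no drift inequality of the required form could hold for a polynomial Lyapunov function of degree $n$.
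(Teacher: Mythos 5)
Your proposal is correct, but note that the paper itself offers no proof of this statement: it is imported verbatim from \cite{Briat:13i}, where the argument is exactly the one you reconstruct --- the linear Foster--Lyapunov function $V(x)=v^\T x$, whose drift $v^\T b_0+v^\T Ax+v^\T S_bY(x)$ loses its quadratic part because $v^\T S_b=0$, combined with the geometric drift criterion of \cite{Meyn:09} and polynomial Lyapunov functions for the higher moments. The only remark worth making is that your ``genuinely delicate'' uniformity-in-$n$ concern is not actually needed: each moment order $n$ is handled by a separate, fixed-degree drift inequality, so no estimate uniform over $n$ is required.
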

We also have the following immediate corollary pertaining on unimolecular reaction networks:
\begin{corollary}
  Let us consider an irreducible unimolecular reaction network $(\Xz,\mathcal{R})$ with fixed rate parameters; i.e. $A=A(\rho_u)$ and $b_0=b_0(\rho_0)$. Assume that there exists a vector $v\in\mathbb{R}^d_{>0}$ such that $v^TA<0$. Then, the  reaction network $(\Xz,\mathcal{R})$ is exponentially ergodic and all the moments are bounded and converging.
\end{corollary}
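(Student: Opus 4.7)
The plan is to observe that the corollary is the specialization of the preceding theorem to networks with no second-order reactions, so the proof reduces to checking that the hypothesis $v^T S_b = 0$ of the theorem becomes vacuous in the unimolecular setting. First I would recall that, in the decomposition $\lambda(x) = \col(w_0(\rho_0), W(\rho_u)x, Y(\rho_b,x))$ introduced just before Definition~\ref{def:Ab}, a unimolecular network is by definition one for which the bimolecular block $Y(\rho_b, x)$ is absent, i.e.\ $n_b = 0$. Consequently, in the associated partition $S = [S_0\ S_u\ S_b]$, the sub-matrix $S_b \in \mathbb{Z}^{d \times 0}$ is the empty matrix, so $v^T S_b = 0$ holds trivially for every $v \in \mathbb{R}^d_{>0}$.

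Next I would verify that the characteristic matrix and offset vector coincide in both formulations: $A(\rho_u) = S_u W(\rho_u)$ and $b_0(\rho_0) = S_0 w_0(\rho_0)$ are defined identically in the unimolecular and bimolecular cases (Definition~\ref{def:Ab}), and $A(\rho_u)$ remains Metzler for $\rho_u \ge 0$. The irreducibility assumption in the corollary is the same as in the theorem, so the underlying Markov process on $\mathbb{Z}^d_{\ge 0}$ is well-defined and irreducible.

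It then suffices to invoke the theorem: given the hypothesized $v \in \mathbb{R}^d_{>0}$ with $v^T A < 0$, the pair $(v^T S_b, v^T A) = (0, v^T A)$ satisfies both structural conditions of the theorem (with the first being vacuous). The theorem therefore yields exponential ergodicity of $(\Xz, \mathcal{R})$ and the boundedness and convergence of all moments, which is the desired conclusion.

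Since the argument is purely a reduction by specialization, there is essentially no obstacle; the only thing that could be considered delicate is the convention $v^T S_b = 0$ when $S_b$ has no columns, but this is standard and can be stated explicitly in one line. No new estimates, Lyapunov functions, or algebraic manipulations are required beyond quoting the theorem.
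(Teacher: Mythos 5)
Your proposal is correct and matches the paper's intent exactly: the paper states this as an ``immediate corollary'' of the preceding theorem and gives no separate proof, the implicit argument being precisely your observation that a unimolecular network has $n_b=0$, so $S_b$ is empty and the condition $v^TS_b=0$ is vacuous. Your write-up simply makes that one-line reduction explicit.
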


\subsection{Antithetic integral control of unimolecular networks}

Antithetic integral control has been first proposed in \cite{Briat:15e} for solving the perfect adaptation problem in stochastic reaction networks. The underlying idea is to augment the open-loop network $(\Xz,\mathcal{R})$ with an additional set of species and reactions (the controller). The usual set-up is that this controller network acts on the production rate of the molecular species $\X{1}$ (the \emph{actuated species}) in order to steer the mean value of the \emph{controlled species} $\X{\ell}$, $\ell\in\{1,\ldots,d\}$, to a desired set-point (the reference). To the regulation problem, it is often sought to have a controller that can ensure perfect adaptation for the controlled species. As proved in \cite{Briat:15e}, the antithetic integral control motif $(\Zz,\mathcal{R}^c)$ defined with
\begin{equation}
  \phib\rarrow{\mu}\Z{1},\ \phib\rarrow{\theta X_\ell}\Z{2}, \Z{1}+\Z{2}\rarrow{\eta}\phib, \phib\rarrow{kZ_1}\X{1}
\end{equation}
solves this control problem with the set-point being equal to $\mu/\theta$. Above, $\Z{1}$ and $\Z{2}$ are the \emph{controller species}. The four controller parameters $\mu,\theta,\eta,k>0$ are assumed to be freely assignable to any desired value. The first reaction is the \emph{reference reaction} as it encodes part of the reference value $\mu/\theta$ as its own rate. The second one is the \emph{measurement reaction} that produces the species $\Z{2}$ at a rate proportional to the current population of the controlled species $\X{\ell}$. The third reaction is the \emph{comparison reaction} as it compares the populations of the controller species and annihilates one molecule of each when these populations are both positive. Finally, the fourth reaction is the \emph{actuation reaction} that produces the actuated species $\X{1}$ at a rate proportional to the controller species $\Z{1}$.

The following fundamental result states conditions under which a unimolecular reaction network can be controlled using an antithetic integral controller:
\begin{theorem}[\cite{Briat:15e}]\label{th:affine:nominal}
Suppose that the open-loop reaction network $(\Xz,\mathcal{R})$ is unimolecular and that the state-space of the closed-loop reaction network $((\Xz,\Zz),\mathcal{R}\cup\mathcal{R}^c)$ is irreducible. Let us assume that $\rho_0$ and $\rho_u$ are fixed and known (i.e. $A=A(\rho_u)$ and $b_0=b_0(\rho_0)$) and assume, further, that there exist vectors $v\in\mathbb{R}_{>0}^d$, $w\in\mathbb{R}_{\ge0}^d$, $w_1>0$, such that
\begin{equation}
v^TA<0,\ w^TA+e_\ell^T=0\ \textnormal{and}\ \dfrac{\mu}{\theta}>\dfrac{v^Tb_0}{c e_\ell^Tv}
\end{equation}
 where $c>0$ verifies $v^T(A+cI)\le0$.

Then, for any values for the controller rate parameters $\eta,k>0$, (i) the closed-loop network is ergodic, (ii) $\E[X_\ell(t)]\to\mu/\theta$ as $t\to\infty$ and (iii) $\E[X(t)X(t)^T]$ is bounded over time.
\end{theorem}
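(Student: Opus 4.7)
Given the closed-loop continuous-time Markov chain $(X(t),Z_1(t),Z_2(t))_{t\geq 0}$ on $\mathbb{Z}_{\geq 0}^{d+2}$, the natural tool for (i) is the Foster--Lyapunov exponential-ergodicity criterion of Meyn and Tweedie. Irreducibility is granted by hypothesis, so it only remains to construct a proper, positive function $V$ and constants $c_1,c_2>0$ such that the generator $\mathcal{A}$ of the closed-loop chain satisfies $\mathcal{A}V\leq -c_1V+c_2$. The only computational input is the action of $\mathcal{A}$ on linear functionals: writing out the five reaction groups, the plant reactions contribute $u^T(Ax+b_0)+kz_1u_1$ to the drift of $u^Tx$ (the last term from the actuation reaction), the reference reaction contributes $\mu$ to the drift of $z_1$, the measurement reaction contributes $\theta x_\ell$ to the drift of $z_2$, and the comparison reaction contributes $-\eta z_1z_2$ to both.

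I would build $V$ as a weighted combination of $v^Tx$, $w^Tx$, $z_1$, $z_2$, together with a super-linear term (e.g.\ a quadratic in $z_1+z_2$) whose role is to activate the annihilation. The component $v^Tx$ yields the strict decay $-c\,v^Tx+v^Tb_0$ thanks to $v^T(A+cI)\leq 0$, but introduces a parasitic $kv_1z_1$ from actuation; the component $w^Tx$ uses $w^TA+e_\ell^T=0$ to trade plant decay against the measurement-induced $\theta x_\ell$ influx in the drift of $z_2$; and the super-linear controller piece invokes the bilinear annihilation $\eta z_1z_2$ to dominate the residual linear-in-$z_1$ parasitic terms. After a careful choice of weights the drift of $V$ reduces to $-c_1V$ plus a constant proportional to $v^Tb_0-(\mu/\theta)\,c\,e_\ell^Tv$, which is made strictly negative exactly by the set-point inequality $\mu/\theta>v^Tb_0/(c\,e_\ell^Tv)$, establishing (i).

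Claim (ii) then follows from stationarity in one line. Applying $\mathcal{A}$ to the linear test function $f(x,z_1,z_2)=z_1-z_2$ gives $\mathcal{A}f=\mu-\theta x_\ell$; integrating against the unique stationary distribution $\pi$ produced by (i) and using $\pi(\mathcal{A}f)=0$ yields $\mathbb{E}_\pi[X_\ell]=\mu/\theta$, and the exponential ergodicity from (i) propagates this identity to $\mathbb{E}[X_\ell(t)]\to\mu/\theta$. Claim (iii) is obtained by re-running the drift analysis with a Lyapunov function that is quadratic in the state: since $(\Xz,\mathcal{R})$ is unimolecular all plant propensities are linear in $x$, so the generator of a quadratic in $x$ remains quadratic in $x$, while the annihilation supplies a negative cubic contribution in $z_1+z_2$ that closes the drift inequality at the quadratic level and forces $\mathbb{E}[X(t)X(t)^T]$ to stay bounded in time.

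\textbf{Expected main obstacle.} The delicate point is the Lyapunov construction itself, specifically the reconciliation of the linear actuation $kz_1e_1$ from controller to plant with the bilinear annihilation $\eta z_1z_2$: a purely linear $V$ fails on configurations where $z_1$ is large but $z_2\approx 0$, so one genuinely needs to blend $v^Tx$, $w^Tx$ and a super-linear controller piece with weights arranged so that the residual scalar in the drift matches the signed margin $(\mu/\theta)\,c\,e_\ell^Tv-v^Tb_0$, which is positive by the set-point hypothesis.
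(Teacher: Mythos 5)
First, a point of order: the paper you are working from does not prove Theorem~\ref{th:affine:nominal} at all --- it is recalled from \cite{Briat:15e} as motivation for the robustness analysis that follows --- so your proposal can only be measured against the proof in that reference. Your choice of framework is the right one and matches the spirit of that proof: Foster--Lyapunov drift conditions for the closed-loop generator $\mathcal{A}$ for (i), the integral-action identity $\mathcal{A}(z_1-z_2)=\mu-\theta x_\ell$ together with stationarity for (ii), and a quadratic drift argument for (iii). Your computation of the generator on linear functionals is correct, and you have correctly located the crux of the matter in the interaction between the parasitic actuation term $kv_1z_1$ and the bilinear annihilation propensity $\eta z_1z_2$.

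However, the remedy you propose for that crux does not work, and this is a genuine gap rather than a detail of bookkeeping. Consider the states $(x,z_1,z_2)=(0,n,0)$ with $n$ large. There the annihilation and measurement propensities vanish identically, and for any candidate of the separable form $V(x,z)=u^Tx+\phi(z_1,z_2)$ one computes $\mathcal{A}V(0,n,0)=u^Tb_0+ku_1n+\mu\bigl(\phi(n+1,0)-\phi(n,0)\bigr)$ with $u_1>0$ (properness forces a positive weight on $x_1$). Forcing even the weak drift bound $\mathcal{A}V\le -c_1<0$ on this non-compact set would require $\phi(n,0)\to-\infty$ quadratically, contradicting the nonnegativity/properness of $V$; and adding a positive super-linear term in $z_1+z_2$, as you suggest, makes the drift on this set strictly \emph{worse} (it contributes $+\epsilon\mu(2n+1)$ rather than anything negative, because the annihilation you mean to ``activate'' has zero propensity whenever $z_2=0$). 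The needed negative drift on $\{z_2=0,\ z_1\ \text{large}\}$ can only be extracted dynamically --- large $z_1$ forces production of $X_1$, hence of $X_\ell$, hence of $Z_2$, and only then does annihilation switch on --- which is precisely why the proof in \cite{Briat:15e} is a multi-page argument involving non-separable constructions and the set-point condition $\mu/\theta>v^Tb_0/(ce_\ell^Tv)$ in an essential way, rather than a one-page weighted-combination computation. Your ``careful choice of weights'' therefore conceals the entire difficulty; the same objection applies to the ``negative cubic contribution'' invoked for (iii), which also vanishes on $\{z_2=0\}$. Finally, the one-line derivation of (ii) from $\pi(\mathcal{A}f)=0$ additionally requires the integrability of $X_\ell$, $Z_1$, $Z_2$ under $\pi$ and the uniform integrability of $X_\ell(t)$ (supplied by the moment bounds of (iii)) to pass from $\mathbb{E}_\pi[X_\ell]=\mu/\theta$ to $\mathbb{E}[X_\ell(t)]\to\mu/\theta$; this should be stated rather than assumed.
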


We can see that the conditions above consist of the combination of an ergodicity condition (i.e. $v^TA<0$) and an output controllability condition for Hurwitz stable matrices $A$ (i.e. $w^TA+e_\ell^T=0$ with $w_1>0$), which are fully consistent with the considered control problem. Note, however, that unlike in the deterministic case, the above result proves that the closed-loop network cannot be unstable if the conditions on the open-loop network are met; i.e. have trajectories that grow unboundedly with time. This is illustrated in more details in the supplemental material of \cite{Briat:15e}.

As the robust/structural output controllability problem has been completely solved in \cite{Briat:16cdc}, we will only focus on checking the ergodicity condition in the rest of the paper.

\section{Robust ergodicity of reaction networks}\label{sec:rob_ergdocity}

\subsection{Preliminaries}

The following lemma will be useful in proving the main results of this section:
\begin{lemma}\label{lem:det}
Let us consider a parameter-dependent Metzler matrix $M(\theta)\in\mathbb{R}^{d\times d}$,  $\theta\in\Theta\subset\mathbb{R}^N_{\ge0}$, where $\Theta$ is compact and connected.  Then, the following statements are equivalent:
\begin{enumerate}[(a)]
  \item The matrix $M(\theta)$ is Hurwitz stable for all $\theta\in\Theta$.
  \item The coefficients of the characteristic polynomial of $M(\theta)$ are positive $\Theta$.
  \item The following conditions hold:
  \begin{enumerate}[(c1)]
    \item there exists a $\theta^*\in\Theta$ such that $M(\theta^*)$ is Hurwitz stable, and
    \item for all $\theta\in\Theta$ we have that $(-1)^d\det(M(\theta))>0$.
  \end{enumerate}
\end{enumerate}
\end{lemma}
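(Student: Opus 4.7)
The plan is to establish the cycle $(a)\Rightarrow(b)\Rightarrow(c)\Rightarrow(a)$, with the two non‑trivial ingredients being the Perron–Frobenius property of Metzler matrices and the connectedness of $\Theta$. Recall that for any Metzler matrix $M$ the spectral abscissa $\alpha(M):=\max\{\mathrm{Re}(\lambda):\lambda\in\sigma(M)\}$ is attained at a real eigenvalue (a Perron–Frobenius type statement obtained after writing $M=-sI+N$ with $N$ nonnegative for $s$ large enough, so $\alpha(M)=\varrho(N)-s$), and that $\theta\mapsto\alpha(M(\theta))$ is continuous because the spectrum of $M(\theta)$ depends continuously on $\theta$.

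For $(a)\Leftrightarrow(b)$ I would argue at a fixed $\theta$. The direction $(a)\Rightarrow(b)$ is algebraic: factor $\det(\lambda I-M(\theta))$ over $\mathbb{C}$ and pair complex conjugate roots, giving either real factors $(\lambda+a)$ with $a>0$ or quadratic factors $(\lambda^{2}+b\lambda+c)$ with $b,c>0$; the product of such polynomials has positive coefficients. For $(b)\Rightarrow(a)$, positivity of the coefficients of $\det(\lambda I-M(\theta))$ forces this polynomial to be strictly positive on $[0,\infty)$, so $M(\theta)$ has no non‑negative real eigenvalue; but by Perron–Frobenius the spectral abscissa $\alpha(M(\theta))$ \emph{is} such a real eigenvalue, hence must be negative, i.e.\ $M(\theta)$ is Hurwitz stable.

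The implication $(a)\Rightarrow(c)$ is then immediate: $(c1)$ is vacuous and $(c2)$ follows from $(b)$ because the constant coefficient of $\det(\lambda I-M(\theta))$ equals $\det(-M(\theta))=(-1)^{d}\det(M(\theta))$, which is positive on $\Theta$. The remaining implication $(c)\Rightarrow(a)$ is the real content of the lemma, and I would prove it by contradiction. Suppose $(c1)$–$(c2)$ hold but there exists $\theta_{1}\in\Theta$ with $M(\theta_{1})$ not Hurwitz, so $\alpha(M(\theta_{1}))\ge 0$, while $\alpha(M(\theta^{*}))<0$. The continuous map $\theta\mapsto\alpha(M(\theta))$ sends the connected set $\Theta$ to a connected subset of $\mathbb{R}$, i.e.\ an interval, which must therefore contain $0$. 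Hence some $\theta_{0}\in\Theta$ satisfies $\alpha(M(\theta_{0}))=0$, meaning $0\in\sigma(M(\theta_{0}))$ and consequently $\det(M(\theta_{0}))=0$, contradicting $(c2)$.

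The main obstacle is Step $(c)\Rightarrow(a)$: one must justify that the spectral abscissa is both \emph{continuous} in $\theta$ and \emph{real valued} (i.e.\ attained at a real eigenvalue), which is exactly where the Metzler hypothesis is used; once these two facts are in hand, the connectedness of $\Theta$ plus the strict sign condition on $\det(M(\theta))$ closes the argument through a standard intermediate value / no‑crossing argument. Compactness of $\Theta$ plays no explicit role in this proof and is presumably kept only for consistency with how the lemma is invoked later.
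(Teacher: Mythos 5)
Your proof is correct and follows essentially the same route as the paper: the substantive implication $(c)\Rightarrow(a)$ is handled in both cases by observing that the Metzler structure makes the spectral abscissa a real eigenvalue, so a continuity argument over the connected set $\Theta$ would force a zero crossing of $\det(M(\theta))$, contradicting (c2). The only differences are that you prove $(a)\Leftrightarrow(b)$ from scratch where the paper cites Mitkowski, and you use the fact that the continuous image of a connected set is an interval rather than invoking a path between $\theta_s$ and $\theta_u$ --- a slightly cleaner step, since connectedness alone does not guarantee path-connectedness (though it is immaterial for the box-shaped $\Theta$ used later).
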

\begin{proof}
The proof of the equivalence between (a) and (b) follows, for instance, from \cite{Mitkowski:00} and is omitted. It is also immediate to prove that (b) implies (c) since if  $M(\theta)$ is Hurwitz stable for all $\theta\in\Theta$ then (c1) holds and the constant term of the characteristic polynomial of $M(\theta)$ is positive on $\theta\in\Theta$. Using now the fact that that constant term is equal to $(-1)^d\det(M(\theta))$ yields the result.

To prove that (c) implies (a), we use the contraposition. Hence, let us assume that there exists at least a $\theta_u\in\Theta$ for which the matrix $M(\theta_u)$ is not Hurwitz stable. If such a $\theta_u$ can be arbitrarily chosen in $\Theta$, then this implies the negation of statement (c1) (i.e. for all $\theta^*\in\Theta$ the matrix $M(\theta^*)$ is not Hurwitz stable) and the first part of the implication is proved.

Let us consider now the case where there exists some $\theta_s\in\Theta$ such that $M(\theta_s)$ is Hurwitz stable. Let us then choose a $\theta_u$ and a $\theta_s$ such that $M(\theta_u)$ is not Hurwitz stable and $M(\theta_s)$ is. Since $\Theta$ is connected, then there exists a path $\mathscr{P}\subset\Theta$ from $\theta_s$ and $\theta_u$. From Perron-Frobenius theorem, the dominant eigenvalue, denoted by $\lambda_{PF}(\cdot)$, is real and hence, we have that $\lambda_{PF}(M(\theta_s))<0$ and $\lambda_{PF}(M(\theta_u))\ge0$. Hence, from the continuity of eigenvalues then there exists a $\theta_c\in\mathscr{P}$ such that $\lambda_{PF}(M(\theta_c))=0$, which then implies that $(-1)^d\det(M(\theta_c))=0$, or equivalently, that the negation of (c2) holds. This concludes the proof.
\end{proof}

Before stating the next main result of this section, let us assume that $S_u$ in Definition \ref{def:Ab} has the following form
\begin{equation}
  S_u=\begin{bmatrix}
  S_{dg} & S_{ct} & S_{cv}
  \end{bmatrix}
\end{equation}
where $S_{dg}\in\mathbb{R}^{d\times n_{dg}}$ is a matrix with nonpositive columns, $S_{ct}\in\mathbb{R}^{d\times n_{ct}}$ is a matrix with nonnegative columns and $S_{cv}\in\mathbb{R}^{d\times n_{cv}}$ is a matrix with columns containing exactly one negative entry and at least one positive entry. Also, decompose accordingly $\rho_u$ as $\rho_u=:\col(\rho_{dg},\rho_{ct},\rho_{cv}\}$ and define

\begin{equation*}
  \rho_{\bullet}\in\mathcal{P}_{\bullet}:=[\rho_{\bullet}^-,\rho_{\bullet}^+],\ 0\le\rho_{\bullet}^-\le\rho_{\bullet}^+<\infty
\end{equation*}
where $\bullet\in\{dg,ct,cv\}$ and let $\mathcal{P}_u:=\mathcal{P}_{dg}\times\mathcal{P}_{ct}\times\mathcal{P}_{cv}$.

In this regard, we can alternatively rewrite the matrix $A(\rho_u)$ as $A(\rho_{dg},\rho_{ct},\rho_{cv})$. We then have the following result:
\begin{lemma}\label{lem:A+}
The following statements are equivalent:
  \begin{enumerate}[(a)]
       \item The matrix $A(\rho_u)$ is Hurwitz stable for all $\rho_u\in\mathcal{P}_u$.
    \item The matrix
    \begin{equation}
          A^+(\rho_{cv}):= A(\rho_{dg}^-,\rho_{ct}^+,\rho_{cv})
    \end{equation}
     is Hurwitz stable for all $\rho_{cv}\in\mathcal{P}_{cv}$.
  \end{enumerate}
\end{lemma}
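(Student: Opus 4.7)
The plan is to exploit the Metzler structure of $A(\rho_u)$ together with the sign pattern of the columns of $S_u$ in order to reduce the verification over the full box $\mathcal{P}_u$ to the one-parameter slice $\mathcal{P}_{cv}$. The implication $(a)\Rightarrow(b)$ is immediate, since the family $\{A^+(\rho_{cv}):\rho_{cv}\in\mathcal{P}_{cv}\}$ is contained in $\{A(\rho_u):\rho_u\in\mathcal{P}_u\}$; all the substantive work lies in $(b)\Rightarrow(a)$.

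First I would partition $W(\rho_u)$ into row-blocks $W_{dg}(\rho_{dg})$, $W_{ct}(\rho_{ct})$, $W_{cv}(\rho_{cv})$ conformally with the column partition of $S_u$, which gives the additive decomposition
\[
A(\rho_u) \;=\; S_{dg}W_{dg}(\rho_{dg}) + S_{ct}W_{ct}(\rho_{ct}) + S_{cv}W_{cv}(\rho_{cv}).
\]
Since $W(\rho_u)x$ is a vector of first-order mass-action propensities, each row of $W(\rho_u)$ has a single positive entry that is linear in exactly one rate, so every block above is entrywise nonnegative and componentwise monotone in its own parameters.

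The key step is then to establish the pointwise inequality $A(\rho_u)\le A^+(\rho_{cv})$ for every $\rho_u\in\mathcal{P}_u$. Indeed, the nonpositivity of the columns of $S_{dg}$ makes $S_{dg}W_{dg}(\rho_{dg})$ entrywise nonincreasing in $\rho_{dg}$, whence $S_{dg}W_{dg}(\rho_{dg})\le S_{dg}W_{dg}(\rho_{dg}^-)$; the nonnegativity of the columns of $S_{ct}$ gives similarly $S_{ct}W_{ct}(\rho_{ct})\le S_{ct}W_{ct}(\rho_{ct}^+)$. The conversion term $S_{cv}W_{cv}(\rho_{cv})$ is identical on both sides. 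This also explains why $\rho_{cv}$ cannot be worst-cased away: since the columns of $S_{cv}$ have mixed signs, entries of $S_{cv}W_{cv}(\rho_{cv})$ are in general neither monotonically increasing nor decreasing in $\rho_{cv}$, so its parameter must remain in the quantifier in $(b)$.

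The final step converts componentwise domination into spectral domination. Both $A(\rho_u)$ and $A^+(\rho_{cv})$ are Metzler, so I would invoke the monotonicity of the Perron--Frobenius eigenvalue on the Metzler cone: if $M_1\le M_2$ entrywise and both are Metzler, then $\lambda_{PF}(M_1)\le\lambda_{PF}(M_2)$, which reduces to the nonnegative case after shifting both matrices by $\alpha I$ for $\alpha$ large enough. Applied to the inequality above, this yields $\lambda_{PF}(A(\rho_u))\le\lambda_{PF}(A^+(\rho_{cv}))<0$ under hypothesis $(b)$, proving Hurwitz stability of $A(\rho_u)$ throughout $\mathcal{P}_u$. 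The only mildly delicate point is the monotonicity of $\lambda_{PF}$ on the Metzler cone; should one wish to avoid citing it, it can be replaced by a continuity-plus-determinant argument in the spirit of Lemma \ref{lem:det}, deforming $(\rho_{dg},\rho_{ct})$ continuously to $(\rho_{dg}^-,\rho_{ct}^+)$ with $\rho_{cv}$ fixed and arguing that $(-1)^d\det(A(\cdot))$ cannot change sign along the path.
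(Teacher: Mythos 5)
Your proposal is correct and follows essentially the same route as the paper's proof: the entrywise bound $A(\rho_{dg},\rho_{ct},\rho_{cv})\le A^+(\rho_{cv})$ obtained by worst-casing the degradation and catalytic parameters, followed by the monotonicity of $\lambda_{PF}$ for Metzler matrices (cited in the paper from Berman and Plemmons). Your added justification of the entrywise inequality via the sign structure of $S_{dg}$, $S_{ct}$ and the nonnegativity of the rows of $W$ is a welcome elaboration of a step the paper leaves terse, but it does not change the argument.
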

\begin{proof}
The proof that (a) implies (b) is immediate. To prove that (b) implies (a), first note that we have
\begin{equation}
  A(\rho_{dg},\rho_{ct},\rho_{cv})\le A^+(\rho_{cv})=A(\rho_{dg}^-,\rho_{ct}^+,\rho_{cv})
\end{equation}
since for all $(\rho_{dg},\rho_{ct},\rho_{cv})\in\mathcal{P}_{u}$. Using the fact that for two Metzler matrices $B_1,B_2$, the inequality $B_1\le  B_2$ implies  $\lambda_{PF}(B_1)\le \lambda_{PF}(B_2)$ \cite{Berman:94}, then we can conclude that $A(\rho_{dg}^-,\rho_{ct}^+,\rho_{cv})$ is Hurwitz stable for all  $\rho_{cv}\in\mathcal{P}_{cv}$ if and only if  the matrix $A(\rho_{dg},\rho_{ct},\rho_{cv})$ is Hurwitz stable for all $(\rho_{dg},\rho_{ct},\rho_{cv})\in\mathcal{P}_{u}$. This completes the proof.
\end{proof}

\subsection{Unimolecular networks}

The following theorem states the main result on the robust ergodicity of unimolecular reaction networks:
\begin{theorem}\label{th:uni_rob}
 Let $A(\rho_u)\in\mathbb{R}^{d\times d}$ be the characteristic matrix of some unimolecular network and $\rho_u\in\mathcal{P}_u$. Then, the following statements are equivalent:
 \begin{enumerate}[(a)]
   \item The matrix $A(\rho_u)$ is Hurwitz stable for all $\rho_u\in\mathcal{P}_u$.
    \item The matrix
    \begin{equation}
          A^+(\rho_{cv}):= A(\rho_{dg}^-,\rho_{ct}^+,\rho_{cv})
    \end{equation}
     is Hurwitz stable for all $\rho_{cv}\in\mathcal{P}_{cv}$.
     \item There exists a $\rho_{cv}^s\in\mathcal{P}_{cv}$ such that the matrix $A^+(\rho_{cv}^s)$ is Hurwitz stable and the polynomial

     $(-1)^d\det(A^+(\rho_{cv}))$ is positive for all $\rho_{cv}\in\mathcal{P}_{cv}$.
    \item There exists a polynomial vector-valued function ${v:\mathcal{P}_{cv}\mapsto\mathbb{R}_{>0}^d}$ of degree at most $d-1$ such that

    ${v(\rho_{cv})^TA^+(\rho_{cv})<0}$ for all $\rho_{cv}\in\mathcal{P}_{cv}$.
 \end{enumerate}
\end{theorem}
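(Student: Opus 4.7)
The proof naturally splits into three equivalences: (a)$\Leftrightarrow$(b), (b)$\Leftrightarrow$(c) and (b)$\Leftrightarrow$(d). The first is nothing but a restatement of Lemma~\ref{lem:A+} and requires no further argument. For (b)$\Leftrightarrow$(c), I would apply Lemma~\ref{lem:det} with the parameter-dependent Metzler matrix $M(\theta):=A^+(\rho_{cv})$ and the parameter set $\Theta:=\mathcal{P}_{cv}$. The set $\mathcal{P}_{cv}$, being a Cartesian product of bounded closed intervals, is compact and connected, and $A^+(\rho_{cv})$ is Metzler on $\mathcal{P}_{cv}$ because $A(\rho_u)$ is Metzler for every $\rho_u\ge 0$. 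Item (c1) of Lemma~\ref{lem:det} then becomes the existence of the stabilizing parameter $\rho_{cv}^s$, while (c2) becomes the positivity of $(-1)^d\det(A^+(\rho_{cv}))$ on $\mathcal{P}_{cv}$, which is exactly statement (c).

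\textbf{(d)$\Leftrightarrow$(b).} The direction (d)$\Rightarrow$(b) is obtained pointwise from the standard fact that a Metzler matrix $M$ is Hurwitz whenever there exists a positive $v$ with $v^T M<0$. The substantive direction is (b)$\Rightarrow$(d), for which I would produce an explicit polynomial certificate using the adjugate identity. Define
\[
v(\rho_{cv}):=(-1)^{d+1}\operatorname{adj}\!\bigl(A^+(\rho_{cv})\bigr)^T \mathds{1}_d.
\]
Since $A(\rho_u)=S_u W(\rho_u)$ and each unimolecular propensity has the form $\rho_k x_i$, the matrix $W$ depends linearly on its rate parameters; fixing $\rho_{dg}=\rho_{dg}^-$ and $\rho_{ct}=\rho_{ct}^+$ therefore makes the entries of $A^+(\rho_{cv})$ affine in $\rho_{cv}$. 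Every entry of the adjugate is then a $(d-1)\times(d-1)$ minor of a matrix with affine entries, hence a polynomial of total degree at most $d-1$ in $\rho_{cv}$. Using $\operatorname{adj}(M)M=\det(M)\,I$ one obtains
\[
v(\rho_{cv})^T A^+(\rho_{cv})=(-1)^{d+1}\det\!\bigl(A^+(\rho_{cv})\bigr)\mathds{1}_d^T,
\]
and (b), read through the already-established equivalence (b)$\Leftrightarrow$(c), gives $(-1)^d\det(A^+(\rho_{cv}))>0$ throughout $\mathcal{P}_{cv}$, so the right-hand side is strictly negative. Strict positivity of $v(\rho_{cv})$ follows from the nonsingular $M$-matrix structure of $-A^+(\rho_{cv})$: for any Hurwitz Metzler matrix $M$, the inverse $-M^{-1}$ is entrywise nonnegative with strictly positive diagonal, so $-(A^+)^{-T}\mathds{1}_d$ has every component bounded below by a positive diagonal entry of $-(A^+)^{-1}$; multiplying by the positive scalar $(-1)^d\det(A^+(\rho_{cv}))$ preserves positivity and returns $v(\rho_{cv})$.

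\textbf{Main obstacle.} The delicate step is the construction of $v$ in (b)$\Rightarrow$(d), where three requirements must be met simultaneously: polynomial dependence on $\rho_{cv}$, the uniform degree bound $d-1$, and strict entrywise positivity everywhere on $\mathcal{P}_{cv}$. The adjugate trick clears the denominator of the natural rational certificate $-(A^+)^{-T}\mathds{1}_d$; the affine dependence of $A^+$ on $\rho_{cv}$, which is a consequence of the unimolecular hypothesis, is what bounds the cofactor degrees by $d-1$; and the nonsingular $M$-matrix property of $-A^+(\rho_{cv})$ on $\mathcal{P}_{cv}$ supplies the uniform positivity. The remaining directions are essentially bookkeeping through Lemmas~\ref{lem:A+} and~\ref{lem:det}.
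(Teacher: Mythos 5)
Your proposal is correct and follows essentially the same route as the paper: (a)$\Leftrightarrow$(b) via Lemma~\ref{lem:A+}, (b)$\Leftrightarrow$(c) via Lemma~\ref{lem:det}, and for (b)$\Rightarrow$(d) the identical adjugate certificate $v(\rho_{cv})^T=(-1)^{d+1}\mathds{1}_d^T\operatorname{adj}(A^+(\rho_{cv}))$ with the degree bound coming from the affine dependence of $A^+$ on $\rho_{cv}$. Your justification of the strict positivity of $v$ (via the nonnegativity and strictly positive diagonal of $-(A^+)^{-1}$ for a Hurwitz Metzler matrix) is in fact slightly more detailed than the paper's, which simply asserts it.
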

\begin{proof}
The equivalence between the statement (a), (b) and (c) directly follows from Lemma \ref{lem:det} and Lemma \ref{lem:A+}. To prove the equivalence between the statements (b) and (d), first remark that (b) is equivalent to the fact that for any $q(\rho_{cv})>0$ on $\mathcal{P}_{cv}$, there exists a unique parameterized vector $v(\rho_{cv})\in\mathbb{R}^d$ such that
$v(\rho_{cv})>0$ and $v(\rho_{cv})^TA^+(\rho_{cv})=-q(\rho_{cv})^T$ for all $\rho_{cv}\in\mathcal{P}_{cv}$. Choosing $q(\rho_{cv})=(-1)^d\mathds{1}_n\det(A^+(\rho_{cv}))$, we get that such a $v(\rho_{cv})$ is given by
\begin{equation}
\begin{array}{rcl}
    v(\rho_{cv})^T      &=&        -(-1)^d\mathds{1}_d^T\det(A^+(\rho_{cv}))A^+(\rho_{cv})^{-1}\\
                                &=&       (-1)^{d+1}\mathds{1}^T_d\textnormal{Adj}(A^+(\rho_{cv}))>0
\end{array}
\end{equation}
for all $\rho_{cv}\in\mathcal{P}_{cv}$. Since the matrix $A^+(\rho_{cv})$ is affine in $\rho_{cv}$, then the adjugate matrix $\textnormal{Adj}(A^+(\rho_{cv})$ contains entries of at most degree $d-1$ and the conclusion follows.
\end{proof}

Checking the condition (c) amounts to solving two problems. The first one is is concerned with the construction of a stabilizer $\rho_{cv}\in\mathcal{P}_{cv}$ for the matrix $A^+(\rho_{cv})$ whereas the second one is about checking whether a polynomial is positive on a compact set. The first problem can be easily solved by checking whether $A^+(\rho_{cv})$ is Hurwitz stable for some randomly chosen point in $\mathcal{P}_{cv}$. For the second one, optimization-based methods can be used such as those based on the Handelman's Theorem combined with linear programming \cite{Handelman:88,Briat:11h} or Putinar's Positivstellensatz combined with semidefinite programming \cite{Putinar:93,Parrilo:00}. Note also that the degree $d-1$ is a worst case degree and that, in fact, polynomials of lower degree will in general be enough for proving the Hurwitz stability of the matrix $A^+(\rho_{cv})$ for all $\rho_{cv}\in\mathcal{P}_{cv}$. For instance, the matrices $A(\rho)$ and $A^+(\rho_{cv})$ are very sparse in general due to the particular structure of biochemical reaction networks. The sparsity property is not considered here but could be exploited to refine the necessary degree for the polynomial vector $v(\rho_{cv})$.

In is important to stress here that Theorem \ref{th:uni_rob} can only be considered when the rate parameters are time-invariant (i.e. constant deterministic or drawn from a distribution). When they are time-varying (e.g. time-varying stationary random variables), a possible workaround relies on the use of a constant vector $v$ as formulated below:
\begin{proposition}[Constant $v$]\label{prop:const_V_Uni}
 Let $A(\rho_u)\in\mathbb{R}^{d\times d}$ be the characteristic matrix of some unimolecular network and $\rho_u\in\mathcal{P}_u$. Then, the following statements are equivalent:
\begin{enumerate}[(a)]
  \item There exists a vector $v\in\mathbb{R}^d_{>0}$ such that ${v^TA^+(\rho_{cv})<0}$ holds for all $\rho_{cv}\in\mathcal{P}_{cv}$.
  \item There exists a vector $v\in\mathbb{R}^d_{>0}$ such that $v^TA^+(\theta)<0$ holds for all $\theta\in\textnormal{vert}(\mathcal{P}_{cv})$ where $\textnormal{vert}(\mathcal{P}_{cv})$ denotes the set of vertices of the set $\mathcal{P}_{cv}$.
\end{enumerate}
\end{proposition}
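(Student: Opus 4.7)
The plan is to leverage the affine dependence of $A^+(\rho_{cv})$ on $\rho_{cv}$ — already noted in the proof of Theorem \ref{th:uni_rob} and stemming from the fact that $W(\rho_u)$ is linear in the rate vector under mass-action kinetics for unimolecular reactions — together with the observation that $\mathcal{P}_{cv}$ is a hyperrectangle, and thus a convex polytope whose set of extreme points coincides exactly with $\textnormal{vert}(\mathcal{P}_{cv})$. For any fixed $v\in\mathbb{R}^d_{>0}$, the row-vector-valued map $\rho_{cv}\mapsto v^TA^+(\rho_{cv})$ is then componentwise affine, and this one observation drives the entire argument.

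The implication (a)~$\Rightarrow$~(b) is immediate, since $\textnormal{vert}(\mathcal{P}_{cv})\subset\mathcal{P}_{cv}$ and therefore any $v$ witnessing~(a) a fortiori witnesses~(b). For the converse (b)~$\Rightarrow$~(a), I would take the vector $v$ supplied by~(b) and, for an arbitrary $\rho_{cv}\in\mathcal{P}_{cv}$, write it as a convex combination $\rho_{cv}=\sum_{i}\alpha_i\theta_i$ of the vertices $\theta_i\in\textnormal{vert}(\mathcal{P}_{cv})$ with $\alpha_i\ge 0$ and $\sum_i\alpha_i=1$. Componentwise affinity of $v^TA^+(\cdot)$ then yields
\begin{equation*}
v^TA^+(\rho_{cv})=\sum_i\alpha_i\,v^TA^+(\theta_i),
\end{equation*}
and since each $v^TA^+(\theta_i)<0$ by~(b) while the weights $\alpha_i$ are nonnegative and not all zero, the combination remains strictly negative. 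This establishes~(a) with the same $v$.

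No substantive obstacle is expected: the argument is essentially the classical fact that an affine function on a convex polytope attains its maximum at a vertex. The only point worth stressing is that the reduction to a finite vertex check hinges entirely on the affine structure of $A^+(\rho_{cv})$ in the uncertain parameters, which is a direct consequence of restricting to unimolecular mass-action kinetics; if the dependence on $\rho_{cv}$ were nonlinear, testing only the $2^{n_{cv}}$ vertices would in general be necessary but not sufficient, and one would instead need to fall back on the polynomial formulation of Theorem \ref{th:uni_rob}(d).
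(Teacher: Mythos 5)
Your proof is correct and follows essentially the same route as the paper: the paper's own argument simply invokes the affine (hence convex) dependence of $A^+(\rho_{cv})$ on $\rho_{cv}$ to reduce the check to the vertices of $\mathcal{P}_{cv}$, which is exactly the convex-combination computation you spell out. You merely make explicit the standard "affine function on a polytope attains its extrema at vertices" step that the paper leaves as immediate.
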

\begin{proof}
  The proof exploits the affine, hence convex, structure of the matrix $A^+(\rho_{cv})$. Using this property, it is indeed immediate to show that the inequality ${v^TA^+(\rho_{cv})<0}$ holds for all $\rho_{cv}\in\mathcal{P}_{cv}$ if and only if $v^TA^+(\theta)<0$ holds for all $\theta\in\textnormal{vert}(\mathcal{P}_{cv})$ (see e.g. \cite{Briat:book1} for a similar arguments in the context of quadratic Lyapunov functions).
\end{proof}

The above result is connected to the existence of a linear copositive Lyapunov function for a linear positive switched system with matrices in the family $\{A(\theta):\theta\in\textnormal{vert}(\mathcal{P}_{cv})\}$ for which many characterizations exist; see e.g. \cite{Fornasini:10,Mason:07}.

\subsection{Bimolecular networks}

In the case of bimolecular networks, we have the following result:
\begin{proposition}
 Let $A(\rho_u)\in\mathbb{R}^{d\times d}$ be the characteristic matrix of some bimolecular network and $\rho_u\in\mathcal{P}_u$. Then, the following statements are equivalent:
 \begin{enumerate}[(a)]
   %
\item There exists a polynomial vector-valued function ${v:\mathcal{P}_{u}\mapsto\mathbb{R}_{>0}^d}$ such that
   \begin{equation}\label{eq:dmlsqdksdsmdkml}
     v(\rho_u)>0,\ v(\rho_u)^TS_b=0\ \textnormal{and}\ v(\rho_u)^TA(\rho_u)<0
   \end{equation}
for all $\rho_u\in\mathcal{P}_u$.
   %
%
\item There exists a polynomial vector-valued function ${\tilde{v}:\mathcal{P}_{cv}\mapsto\mathbb{R}^{d-n_b}}$ such that
     \begin{equation}
     \tilde{v}(\rho_{cv})^TS_b^\bot>0\ \textnormal{and}\ \tilde{v}(\rho_{cv})^TS_b^\bot A^+(\rho_{cv})<0
   \end{equation}
for all $\rho_{cv}\in\mathcal{P}_{cv}$ and where $n_b:=\rank(S_b)$ and $S_b^\bot S_b=0$, $S_b^\bot$ full-rank.
 \end{enumerate}
\end{proposition}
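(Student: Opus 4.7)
The plan is to establish the equivalence by exploiting the algebraic fact that the constraint $v^T S_b = 0$ is equivalent to $v^T$ lying in the left nullspace of $S_b$, which is the row span of $S_b^\bot$ (since $S_b^\bot S_b = 0$ and $S_b^\bot$ has full row rank $d - n_b$). Every admissible $v$ therefore factors uniquely as $v^T = \tilde{v}^T S_b^\bot$ for some $\tilde{v} \in \mathbb{R}^{d - n_b}$, and conversely every $\tilde{v}$ yields such a $v$. The task is then to carry the conditions back and forth through this parameterization while also collapsing the dependence on $(\rho_{dg}, \rho_{ct})$ down to $\rho_{cv}$, exactly as was done for the pure ergodicity condition in Lemma \ref{lem:A+}.

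For the direction (b) $\Rightarrow$ (a), I would simply set $v(\rho_u)^T := \tilde{v}(\rho_{cv})^T S_b^\bot$, which is polynomial in $\rho_{cv}$ and hence in $\rho_u$. Positivity of $v$ is exactly the hypothesis $\tilde{v}^T S_b^\bot > 0$, and $v^T S_b = \tilde{v}^T S_b^\bot S_b = 0$ is automatic. For the Lyapunov-type inequality, I would invoke the componentwise bound used in the proof of Lemma \ref{lem:A+}: the sign hypotheses on $S_{dg}$ and $S_{ct}$ make $A(\rho_u) \le A^+(\rho_{cv})$ hold componentwise for every $\rho_u \in \mathcal{P}_u$, and strict positivity of $v$ lets me multiply on the left to obtain $v^T A(\rho_u) \le v^T A^+(\rho_{cv}) = \tilde{v}(\rho_{cv})^T S_b^\bot A^+(\rho_{cv}) < 0$.

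For the converse (a) $\Rightarrow$ (b), I would first collapse the dependence on $(\rho_{dg}, \rho_{ct})$ by defining $\hat{v}(\rho_{cv}) := v(\rho_{dg}^-, \rho_{ct}^+, \rho_{cv})$. This is polynomial in $\rho_{cv}$, inherits $\hat{v} > 0$ and $\hat{v}^T S_b = 0$ from (a), and evaluating (a) at $(\rho_{dg}^-, \rho_{ct}^+, \rho_{cv})$ gives $\hat{v}(\rho_{cv})^T A^+(\rho_{cv}) < 0$. Since $\hat{v}^T$ lies in the row span of $S_b^\bot$ for every $\rho_{cv}$, I would recover $\tilde{v}$ through a constant right inverse of $S_b^\bot$, for example $\tilde{v}(\rho_{cv})^T := \hat{v}(\rho_{cv})^T (S_b^\bot)^T [S_b^\bot (S_b^\bot)^T]^{-1}$, which is well defined and polynomial because $S_b^\bot$ has full row rank. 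The identity $\tilde{v}^T S_b^\bot = \hat{v}^T$ then delivers both inequalities of (b).

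The only step that requires any real care is the transfer of the componentwise bound $A(\rho_u) \le A^+(\rho_{cv})$ to the inequality $v^T A(\rho_u) \le v^T A^+(\rho_{cv})$: this uses strict positivity of $v$ in a genuine way, which is exactly why the hypothesis in (b) is phrased as $\tilde{v}^T S_b^\bot > 0$ rather than the weaker $\tilde{v} > 0$. Beyond that, the argument is purely algebraic bookkeeping through the fixed matrix $S_b^\bot$, and in contrast to the unimolecular case of Theorem \ref{th:uni_rob} it needs neither the Perron-Frobenius eigenvalue nor continuity-of-eigenvalue arguments of the kind used in Lemma \ref{lem:det}.
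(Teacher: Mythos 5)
Your proof is correct and follows essentially the same route as the paper: the nontrivial direction (b) $\Rightarrow$ (a) is handled identically, by lifting $\tilde{v}$ through $v^T=\tilde{v}^TS_b^\bot$ and using the componentwise bound $A(\rho_u)\le A^+(\rho_{cv})$ (the paper writes this as $A(\rho_u)=A^+(\rho_{cv})-\Delta$ with $\Delta\ge0$) together with positivity of $v$. The only difference is that you spell out the direction (a) $\Rightarrow$ (b), which the paper dismisses as immediate, via evaluation at $(\rho_{dg}^-,\rho_{ct}^+,\rho_{cv})$ and a constant right inverse of $S_b^\bot$; this is a sound and welcome elaboration rather than a different argument.
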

\begin{proof}
  It is immediate to see that (a) implies (b). To prove the converse, first note that we have that $v(\rho_{cv})=(S_b^\bot)^T\tilde{v}(\rho_{cv})$ verifies $v(\rho_{cv})^TS_b=0$ and $v(\rho_{cv})>0$ for all $\rho_{cv}\in\mathcal{P}_{cv}$. This proves the equality and the first inequality in \eqref{eq:dmlsqdksdsmdkml}. Observe now that for any $\rho_u\in\mathcal{P}_u$, there exists a nonnegative matrix $\Delta(\rho_{dg},\rho_{ct})\in\mathbb{R}^{d\times d}_{\ge0}$ such that $A(\rho_u)=A^+(\rho_{cv})-\Delta(\rho_{dg},\rho_{ct})$. Hence, we have that
  \begin{equation}
  \begin{array}{rcl}
        v(\rho_{cv})^TA(\rho_u)&=&v(\rho_{cv})^T(A^+(\rho_{cv})-\Delta(\rho_{dg},\rho_{ct}))\\
        &\le&v(\rho_{cv})^TA^+(\rho_{cv})<0
  \end{array}
  \end{equation}
  which proves the result.
\end{proof}

As in the unimolecular case, we have been able to reduce the number of parameters by using an upper-bound on the characteristic matrix. It is also interesting to note that the condition $\tilde{v}(\rho_{cv})^TS_b^\bot A^+(\rho_{cv})<0$ can be sometimes brought back to a problem of the form $\tilde{v}(\rho_{cv})^TM(\rho_{cv})<0$ for some square, and often Metzler, matrix $M(\rho_{cv})$ which can be dealt in the same way as in the unimolecular case.

The following result can be used when the parameters are time-varying and is the bimolecular analogue of Proposition \ref{prop:const_V_Uni}:
\begin{proposition}[Constant $v$]\label{prop:const_V_Bi}
 Let $A(\rho_u)\in\mathbb{R}^{d\times d}$ be the characteristic matrix of some bimolecular network and $\rho_u\in\mathcal{P}_u$. Then, the following statements are equivalent:
\begin{enumerate}[(a)]
  \item There exists a vector $v\in\mathbb{R}^d_{>0}$ such that $v^TS_b=0$ and $v^TA^+(\rho_{cv})<0$ hold for all $\rho_{cv}\in\mathcal{P}_{cv}$.
  \item There exists a vector $v\in\mathbb{R}^d_{>0}$ such that $v^TS_b=0$ and $v^TA^+(\theta)<0$ hold for all $\theta\in\textnormal{vert}(\mathcal{P}_{cv})$.
\end{enumerate}
\end{proposition}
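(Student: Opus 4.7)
The plan is to essentially mirror the proof of Proposition \ref{prop:const_V_Uni}, with the only addition being to observe that the constraint $v^TS_b=0$ does not involve $\rho_{cv}$ at all and therefore decouples from the vertex argument. The implication (a)$\Rightarrow$(b) is immediate since $\textnormal{vert}(\mathcal{P}_{cv})\subset\mathcal{P}_{cv}$, so if the inequalities (and the equality $v^TS_b=0$) hold on all of $\mathcal{P}_{cv}$, they hold at every vertex.

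For the converse (b)$\Rightarrow$(a), the key structural fact to invoke is that $A(\rho_u)=S_uW(\rho_u)$ depends affinely on the rate vector $\rho_u$, so in particular $A^+(\rho_{cv})=A(\rho_{dg}^-,\rho_{ct}^+,\rho_{cv})$ is affine in $\rho_{cv}$. Hence, for any fixed vector $v\in\mathbb{R}^d_{>0}$, the map $\rho_{cv}\mapsto v^TA^+(\rho_{cv})$ is affine and the set $\mathcal{P}_{cv}$ is a compact box, hence the convex hull of $\textnormal{vert}(\mathcal{P}_{cv})$. Writing any $\rho_{cv}\in\mathcal{P}_{cv}$ as $\rho_{cv}=\sum_i\alpha_i\theta_i$ with $\theta_i\in\textnormal{vert}(\mathcal{P}_{cv})$, $\alpha_i\ge0$ and $\sum_i\alpha_i=1$, affinity gives
\begin{equation}
v^TA^+(\rho_{cv})=\sum_i\alpha_i\, v^TA^+(\theta_i).
\end{equation}
Since by assumption each $v^TA^+(\theta_i)<0$ (componentwise) and the $\alpha_i$ are nonnegative with at least one strictly positive, the strict componentwise inequality is preserved under convex combination, yielding $v^TA^+(\rho_{cv})<0$ for all $\rho_{cv}\in\mathcal{P}_{cv}$.

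Finally, the equality constraint $v^TS_b=0$ is entirely independent of $\rho_{cv}$ and is carried over from statement (b) to statement (a) without modification, so the same vector $v$ witnesses both statements. There is no genuine obstacle here: the only point worth noting is that one needs the strict inequality to be preserved under convex combination, which is fine because a convex combination of vectors that are each strictly negative in every coordinate is itself strictly negative in every coordinate.
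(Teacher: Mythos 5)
Your proof is correct and follows essentially the same route as the paper: the paper gives no separate proof for this proposition precisely because it is the verbatim bimolecular analogue of Proposition~\ref{prop:const_V_Uni}, whose proof rests on the same affinity-of-$A^+(\rho_{cv})$-in-$\rho_{cv}$ and convex-hull-of-vertices argument you use, with the $\rho_{cv}$-independent constraint $v^TS_b=0$ carried along unchanged.
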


\section{Structural ergodicity of unimolecular reaction networks}\label{sec:struct_ergdocity}

We are interested in this section in the structural stability of the characteristic matrix of given unimolecular network. Hence, we have in this case $\mathcal{P}_\bullet:=\mathbb{R}^{n_\bullet}_{>0}$, $\bullet\in\{u,dg,ct,cv\}$ where $n_\bullet$ is the dimension of the vector $\rho_{\bullet}$.

\subsection{A preliminary result}\label{lem:rho1}

\begin{lemma}\label{lem:rescaling}
 Let $A(\rho_u)\in\mathbb{R}^{d\times d}$ be the characteristic matrix of some unimolecular network and $\rho_u\in\mathcal{P}_u$. Then, the following statements are equivalent:
  \begin{enumerate}[(a)]
    \item For all $\rho_{dg}\in\mathcal{P}_{dg}$ and a $\rho_{cv}\in\mathcal{P}_{cv}$, the matrix $A(\rho_{dg},\rho_{cv},0)$ is Hurwitz stable.
    \item The matrix $A(\mathds{1},\rho_{cv},0)$ is Hurwitz stable for all $\rho_{cv}\in\mathcal{P}_{cv}$.
  \end{enumerate}
\end{lemma}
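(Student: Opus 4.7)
The plan is to use a scaling-plus-domination argument that exploits (i) the linearity of $A(\cdot)$ in the rate parameters and (ii) the monotonicity of the Perron--Frobenius eigenvalue on the cone of Metzler matrices. The direction (b)$\Rightarrow$(a) is the substantive one; the converse is immediate by specializing $\rho_{dg}=\mathds{1}\in\mathcal{P}_{dg}$.

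For (b)$\Rightarrow$(a) I would first record the following structural observation about a unimolecular network: every column of $S_{dg}$ is necessarily of the form $-e_i$ (the only way of having a nonpositive stoichiometric column for a reaction with at most one reactant is a pure degradation $\X{i}\to\phib$). Consequently the contribution of the degradation reactions to $A$ is a nonnegative diagonal matrix $D(\rho_{dg})$ that depends linearly on $\rho_{dg}$, and one can write
\begin{equation*}
A(\rho_{dg},\rho_{cv},0)=-D(\rho_{dg})+A_{cv}(\rho_{cv}),
\end{equation*}
where $A_{cv}(\rho_{cv})$ is Metzler and independent of $\rho_{dg}$.

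Next, I would normalize. Fix arbitrary $\rho_{dg}\in\mathcal{P}_{dg}$, $\rho_{cv}\in\mathcal{P}_{cv}$, set $\alpha:=\min_j\rho_{dg,j}>0$, and define $\tilde{\rho}_{dg}:=\rho_{dg}/\alpha$ and $\tilde{\rho}_{cv}:=\rho_{cv}/\alpha$. By linearity of $A$ in the rate parameters,
\begin{equation*}
A(\rho_{dg},\rho_{cv},0)=\alpha\,A(\tilde{\rho}_{dg},\tilde{\rho}_{cv},0),
\end{equation*}
so it suffices to prove Hurwitz stability of $A(\tilde{\rho}_{dg},\tilde{\rho}_{cv},0)$. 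By construction $\tilde{\rho}_{dg}\ge\mathds{1}$ componentwise, so $D(\tilde{\rho}_{dg})\ge D(\mathds{1})$, which gives the componentwise inequality of Metzler matrices
\begin{equation*}
A(\tilde{\rho}_{dg},\tilde{\rho}_{cv},0)=A(\mathds{1},\tilde{\rho}_{cv},0)-\bigl[D(\tilde{\rho}_{dg})-D(\mathds{1})\bigr]\le A(\mathds{1},\tilde{\rho}_{cv},0).
\end{equation*}
Applying hypothesis (b) to $\tilde{\rho}_{cv}\in\mathcal{P}_{cv}$ gives that the right-hand side is Hurwitz, and the monotonicity of the Perron--Frobenius eigenvalue on Metzler matrices \cite{Berman:94} (already invoked in the proof of Lemma \ref{lem:A+}) transfers Hurwitz stability to the left-hand side, concluding the argument.

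The only nontrivial step is the structural observation that, because $\rho_{ct}$ is set to $0$ and the network is unimolecular, the $\rho_{dg}$-dependence is purely diagonal with nonpositive entries; without this, the componentwise domination $A(\tilde{\rho}_{dg},\tilde{\rho}_{cv},0)\le A(\mathds{1},\tilde{\rho}_{cv},0)$ could fail. Once that decomposition is in hand, the rescaling $\alpha$ normalizes $\rho_{dg}$ above $\mathds{1}$ without changing Hurwitz stability, and the rest is an off-the-shelf Perron--Frobenius comparison.
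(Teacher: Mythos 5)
Your proof is correct and takes essentially the same route as the paper's: normalize by $\alpha=\min_j\rho_{dg,j}$, use the homogeneity $A(\rho_{dg},\rho_{cv},0)=\alpha\,A(\rho_{dg}/\alpha,\rho_{cv}/\alpha,0)$ (valid here since $\mathcal{P}_{cv}=\mathbb{R}^{n_{cv}}_{>0}$ is scale-invariant), and conclude via entrywise domination and Perron--Frobenius monotonicity of Metzler matrices; the paper merely phrases the same argument contrapositively. Your explicit observation that the $\rho_{dg}$-dependence is a nonnegative diagonal subtraction is a detail the paper leaves implicit, and in fact the nonpositivity of the columns of $S_{dg}$ already suffices for the componentwise domination.
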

\begin{proof}
  The proof that (a) implies (b) is immediate. To prove the reverse implication, we use contraposition and we assume that there exist a $\rho_{dg}\in\mathcal{P}_{dg}$ and a $\rho_{cv}\in\mathcal{P}_{cv}$ such that $A(\rho_{dg},\rho_{cv},0)$ is not Hurwitz stable. Then, we clearly have that
  \begin{equation}
    A(\rho_{dg},\rho_{cv},0)\le A(\theta \mathds{1},\rho_{cv},0)
  \end{equation}
  where $\theta=\min(\rho_{dg})$ and hence $A(\theta \mathds{1},\rho_{cv},0)$ is not Hurwitz stable. Since $A(\theta \mathds{1},\rho_{cv},0)$ is affine in $\theta$ and $\rho_{cv}$, then we have that  $\theta A(\mathds{1},\rho_{cv}/\theta,0)$ and since $\theta$ is independent of $\rho_{cv}$, then we get that the matrix $A(\mathds{1},\tilde{\rho}_{cv},0)$ is not Hurwitz stable for some $\tilde{\rho}_{cv}\in\mathcal{P}_{cv}$. The proof is complete.
\end{proof}

\subsection{Main result}

\begin{theorem}\label{th:structural}
Let $A(\rho_u)\in\mathbb{R}^{d\times d}$ be the characteristic matrix of some unimolecular network and $\rho_u\in\mathbb{R}^{n_u}_{>0}$. Then, the following statements are equivalent:
\begin{enumerate}[(a)]
\item The matrix $A(\rho_u)$ is Hurwitz stable for all $\rho_u\in\mathbb{R}^{n_u}_{>0}$.
%
\item There exists a polynomial vector $v(\rho_u)\in\mathbb{R}^d$ of degree at most $d-1$ such that $v(\rho_u)>0$ and $v(\rho_u)^TA(\rho_u)<0$ for all $\rho_u\in\mathbb{R}^{n_u}_{>0}$.
  \item There exists a $\rho_u^s\in\mathbb{R}^{n_u}_{>0}$ such that the matrix $A(\rho_u^s)$ is Hurwitz stable and the polynomial $(-1)^d\det(A(\rho_u))$ is positive for all $\rho_u\in\mathbb{R}^{n_u}_{>0}$.
%
\item For all $\rho_{dg}\in\mathbb{R}^{n_{dg}}_{>0}$ and a $\rho_{cv}\in\mathbb{R}^{n_{cv}}_{>0}$, the matrix $A_\rho:=A(\rho_{dg},\rho_{cv},0)$ is Hurwitz stable and we have that $\varrho(W_{ct}A_\rho^{-1}S_{ct})=0$.
  %
  %
    \item The matrix $A_n(\rho_{cv}):=A(\mathds{1},\rho_{cv},0)$ is Hurwitz stable for all $\rho_{cv}\in\mathbb{R}^{n_{cv}}_{>0}$ and $\varrho(W_{ct}A_n(\rho_{cv})^{-1}S_{ct})=0$ for all $\rho_{cv}\in\mathbb{R}^{n_{cv}}_{>0}$.
  %
\end{enumerate}
Moreover, when each column of $S_{cv}$ contains exactly two nonzero entries, one being equal to $-1$ and one being equal to 1, then the above statements are also equivalent to
\begin{enumerate}[(a)]
\setcounter{enumi}{5}
    \item The matrix $A_{\mathds{1}}:=A(\mathds{1},\mathds{1},0)$ is Hurwitz stable and $\varrho(W_{ct}A_{\mathds{1}}^{-1}S_{ct})=0$.
        \end{enumerate}
\end{theorem}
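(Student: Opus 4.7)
The plan is to prove the chain (a)$\Leftrightarrow$(b)$\Leftrightarrow$(c), then (a)$\Leftrightarrow$(d)$\Leftrightarrow$(e), and finally (e)$\Leftrightarrow$(f) under the additional hypothesis on $S_{cv}$. For the first block, I would essentially mimic the proof of Theorem~\ref{th:uni_rob}, which establishes analogous equivalences on compact parameter sets, and extend it to the open orthant $\mathbb{R}^{n_u}_{>0}$ by compact exhaustion: each of the statements (a), (b), (c) holds on $\mathbb{R}^{n_u}_{>0}$ iff it holds on every nested box $[1/R,R]^{n_u}$. On each such compact box, Lemma~\ref{lem:det} gives (a)$\Leftrightarrow$(c), and the explicit construction $v(\rho_u)^T=(-1)^{d+1}\mathds{1}_d^T\,\textnormal{Adj}(A(\rho_u))$ yields a polynomial of degree at most $d-1$ certifying (b). Continuity and the uniform degree bound let the equivalences pass to the open orthant.

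For (a)$\Leftrightarrow$(d) I would decompose $A(\rho_u)=A_\rho+S_{ct}\diag(\rho_{ct})\hat{W}_{ct}$, where $A_\rho=A(\rho_{dg},\rho_{cv},0)$ is Metzler and the perturbation is entrywise nonnegative. The classical Metzler/M-matrix criterion from \cite{Berman:94} states that, for Hurwitz Metzler $A_\rho$ and nonnegative $\Delta$, the sum $A_\rho+\Delta$ is Hurwitz iff $\varrho(-A_\rho^{-1}\Delta)<1$. Using cyclic invariance of the spectral radius, this rewrites as $\varrho(\diag(\rho_{ct})N)<1$, where $N:=-\hat{W}_{ct}A_\rho^{-1}S_{ct}\ge 0$. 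Since this inequality must hold for every $\rho_{ct}\in\mathbb{R}^{n_{ct}}_{>0}$, including arbitrarily large uniform scalings $\rho_{ct}=c\mathds{1}$, and $\varrho(cN)=c\,\varrho(N)$, we are forced to $\varrho(N)=0$; conversely, nilpotency of $N$ is preserved under any positive diagonal rescaling, so $\varrho(\diag(\rho_{ct})N)=0<1$ for every $\rho_{ct}$. This gives exactly the condition in (d).

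For (d)$\Leftrightarrow$(e), the Hurwitz-stability part reduces from arbitrary $\rho_{dg}$ to $\rho_{dg}=\mathds{1}$ by Lemma~\ref{lem:rescaling}. The spectral-radius part is structural: since $A_\rho$ is Hurwitz Metzler, the identity $-A_\rho^{-1}=\int_0^\infty e^{A_\rho t}\,dt$ shows that $(-A_\rho^{-1})_{ji}>0$ exactly when there is a directed path from $i$ to $j$ in the signed graph of $A_\rho$. This reachability pattern depends only on the zero/sign structure of $S_{dg}$, $S_{cv}$, $\hat{W}_{dg}$, $\hat{W}_{cv}$ and not on the positive magnitudes of $\rho_{dg},\rho_{cv}$. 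Consequently the zero pattern of $W_{ct}A_\rho^{-1}S_{ct}$ is invariant across $(\rho_{dg},\rho_{cv})\in\mathbb{R}^{n_{dg}+n_{cv}}_{>0}$, and nilpotency (equivalently $\varrho=0$) is then a purely combinatorial property that can be checked at the single representative $\rho_{dg}=\mathds{1}$.

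For (e)$\Leftrightarrow$(f) under the hypothesis that each column of $S_{cv}$ contains exactly one $+1$ and one $-1$, we have $\mathds{1}^TS_{cv}=0$, so the conversion contribution to $A_n(\rho_{cv})$ is Laplacian-like (every column sums to zero), and $\mathds{1}^TA_n(\rho_{cv})=\mathds{1}^TS_{dg}\hat{W}_{dg}$ is independent of $\rho_{cv}$. I would then interpret $A_n(\rho_{cv})$ as the generator of an absorbing continuous-time Markov chain on the $d$ species, with degradation reactions acting as transitions to an external killing state. Hurwitz stability is then equivalent to every species being able to reach a degradation reaction through the conversion graph, a structural reachability property that is independent of the positive values $\rho_{cv}>0$. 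Combined with the already-established structural nature of the spectral-radius condition, this gives (e)$\Leftrightarrow$(f). The main obstacle I anticipate is here: one must rigorously justify that the Laplacian structure together with the fixed degradation pattern really does reduce Hurwitz stability to graph reachability, and verify that the hypothesis on $S_{cv}$ is indispensable --- a conversion reaction such as $X_i\to 2X_j$ would violate $\mathds{1}^TS_{cv}=0$ and allow certain rate choices to shift the spectral mass and destabilize $A_n(\rho_{cv})$, invalidating the reduction to the single matrix $A_{\mathds{1}}$.
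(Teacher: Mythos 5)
Your proposal is correct in substance and follows the same overall skeleton as the paper --- reduce (a)--(c) to Theorem~\ref{th:uni_rob}, eliminate the catalytic rates via the nonnegative matrix $-W_{ct}A_\rho^{-1}S_{ct}$, invoke Lemma~\ref{lem:rescaling} plus the rate-invariance of the sign pattern of $-A_\rho^{-1}$ for (d)$\Leftrightarrow$(e), and use the zero-column-sum structure of $S_{cv}$ for (f) --- but two of your sub-arguments genuinely differ from the paper's. For (a)$\Leftrightarrow$(d) the paper works through statement (c) and the determinant factorization $\det(A(\rho_u))=\det(A_\rho)\det(I-D(\rho_{ct})W_{ct}A_\rho^{-1}S_{ct})$, arguing that a nonzero spectral radius would let the determinant vanish for some $D(\rho_{ct})$; your route through the regular-splitting criterion $\varrho(-A_\rho^{-1}\Delta)<1$ and the uniform scaling $\rho_{ct}=c\mathds{1}$ is the same mechanism seen through M-matrix theory rather than determinants, and it bypasses statement (c) entirely --- just make sure to first record that (a) forces $A_\rho$ Hurwitz (via $A_\rho\le A(\rho_u)$ and monotonicity of $\lambda_{PF}$), since the splitting criterion presupposes it. For (f) the paper uses $\mathds{1}^Tz$ as a weak Lyapunov function plus LaSalle, while you use the killed-Markov-chain/reachability picture; yours is arguably the more robust formulation (the precise structural statement is that every closed communicating class of the conversion digraph must contain a degraded species, which is equivalent to your "every species reaches a degradation"), and the paper's own closing step here is noticeably terser than yours. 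Finally, your compact-exhaustion device for extending Lemma~\ref{lem:det} from compact boxes to the open orthant addresses a point the paper silently skips when it cites Theorem~\ref{th:uni_rob}; the uniform degree bound $d-1$ from the adjugate construction is exactly what makes that exhaustion work for statement (b).
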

\begin{proof}
The equivalence between the three first statements has been proved in Theorem \ref{th:uni_rob}. Let us prove now that (c) implies (d). Assuming that (c) holds, we get that the existence of a $\rho_u^s=\col(\rho_{dg}^s,\rho_{cv}^s,\rho_{ct}^s)$ such that the matrix $A(\rho_u^s)$ is Hurwitz stable immediately implies that the matrix $A_\rho=A(\rho_{dg},\rho_{cv},0)$ is Hurwitz stable since we have that $A_\rho\le A(\rho_u)$ and, therefore $\lambda_{PF}(A_\rho)\le \lambda_{PF}(A(\rho_u))<0$.
Using now the determinant formula, we have that
\begin{equation}\label{ea:djskdjl}
    \det(A(\rho_u))=\det(A_\rho)\det(I-D(\rho_{ct})W_{ct}A_\rho^{-1}S_{ct})
\end{equation}
where $D(\rho_{ct}):=\diag(\rho_{ct})$ and $W_{ct}$ is defined such that ${\diag(\rho_{ct})W_{ct}x}$ is the vector of propensity functions associated with the catalytic reactions. Since $A_\rho$ is Hurwitz stable then the determinant has fixed sign and is positive if $d$ is even, negative otherwise.  Hence, this implies that
\begin{equation}
\det(I-D(\rho_{ct})M)>0
\end{equation}
for all $\rho_{ct}\in\mathbb{R}^{n_{ct}}_{>0}$ where $M:=-W_{ct}A_\rho^{-1}S_{ct}$. Since the matrices $W_{ct},S_{ct}$ are nonnegative, the diagonal entries of $D(\rho_{ct})$ are positive and $A_\rho^{-1}$ is nonpositive (since $A_\rho$ is Metzler and Hurwitz stable), then $M$ is nonnegative. Then, by the Perron-Frobenius theorem, we have that $\lambda_{PF}(M)=\varrho(M)$. Clearly, the fact that \eqref{ea:djskdjl} holds for all $\rho_{ct}\in\mathbb{R}^{n_{ct}}_{>0}$ implies that $\lambda_{PF}(M)=\varrho(M)=0$ since, otherwise, there would exist a $D(\rho_{ct})$ such that $\det(I-D(\rho_{ct})M)=0$. This completes the argument.

The converse (i.e. (d) implies (c)) can be proven by noticing that if $A_\rho$ is Hurwitz stable, then $A_\rho+\epsilon S_{ct}W_{ct}$ remains Hurwitz stable for some sufficiently small $\epsilon>0$. This proves the existence of a  $\rho_u^s\in\mathbb{R}^d_{>0}$ such that the matrix $A(\rho_u^s)$.  Using the determinant formula, it is immediate to see that the second statement implies the determinant condition of statement (c).

The equivalence between the statements (d) and (e) comes from Lemma \ref{lem:rescaling} and the fact that the sign-pattern of the inverse of a Hurwitz stable Metzler matrix is uniquely defined by its sign-pattern.

Let us now focus on the equivalence between the statements (d) and (f) under the assumption that each column of $S_{cv}$ contains exactly one entry equal to $-1$ and  one equal to 1. Assume w.l.o.g that $S_{dg}=\col(-I_{n_{dg}},0)$. Then, we have that $\mathds{1}_{d}^TA(\rho_{dg},\rho_{cv},0)=\begin{bmatrix}
  -\rho_{dg}^T & 0
\end{bmatrix}$. Hence, the function $V(z)=\mathds{1}_d^Tz$ is a weak Lyapunov function for the linear positive system $\dot{z}=A(\rho_{dg},\rho_{cv},0)z$. Invoking LaSalle's invariance principle, we get that the matrix is Hurwitz stable if the matrix
\begin{equation}
  A^{22}(\rho_{dg},\rho_{cv}):=\begin{bmatrix}
    0\\ I
  \end{bmatrix}^TA(\rho_{dg},\rho_{cv},0)  \begin{bmatrix}
    0 \\ I
  \end{bmatrix}
\end{equation}
is Hurwitz stable for all $(\rho_{dg},\rho_{cv})\in\mathbb{R}^{n_{dg}}_{>0}\times\mathbb{R}^{n_{cv}}_{>0}$. Note that this is a necessary condition for the matrix $A(\rho_{dg},\rho_{cv},0) $ to be Hurwitz stable for all rate parameters values. Hence, this means that the stability of the matrix $A_\rho$ is equivalent to the Hurwitz stability of $A_{\mathds{1}}:=A(\mathds{1},\mathds{1},0)$. Finally, since $A^{22}(\rho_{dg},\rho_{cv})$ is Hurwitz stable, then we have that $\mathds{1}^TA^{22}(\mathds{1},\mathds{1})<0$. The proof is complete.
\end{proof}

\section{Examples}\label{sec:examples}

\subsection{Example 1 - SIR model}

Let us consider the open stochastic SIR model considered in \cite{Briat:13i} described by the matrices
\begin{equation}
  A = \begin{bmatrix}
    -\gamma_s & 0 & k_{rs}\\
    0 & -(\gamma_i+k_{ir}) & 0\\
    0 & k_{ir} & -(\gamma_r+k_{rs})
  \end{bmatrix},\ S_b=\begin{bmatrix}
-1\\
1\\
0
  \end{bmatrix}
\end{equation}
where all the parameters are positive. The first, second and third states are associated with susceptible, infectious and removed people, respectively. The reaction network consists of 6 reactions: 5 unimolecular reactions including 3 degradation reactions (with rates $\gamma_s,\gamma_i$ and $\gamma_r$) and 2 conversion reactions (with rates $k_{ir}$ and $k_{rs}$), and 1 bimolecular reaction that implements the contamination reaction that converts one susceptible person to an infectious one.

The constraint $v^TS_b=0$ enforces that $v=\tilde{v}^TS_b^\bot$, $\tilde{v}>0$, where $S_b^\bot=\begin{bmatrix}
1 & 1 & 0\\
0 & 0 & 1
\end{bmatrix}$. This leads to
\begin{equation}
  \tilde{v}^TS_b^\bot A<0\Leftrightarrow \tilde{v}^T\begin{bmatrix}
-(\gamma_i+k_{ir}) & k_{rs}\\
 k_{ir} & -(\gamma_r+k_{rs})
  \end{bmatrix}<0.
\end{equation}
Since the entries are not independent, the use of sign-matrices or interval matrices are conservative. However, if we use Theorem \ref{th:structural}, then we can just substitute the parameters by 1 and observe that the resulting matrix is Hurwitz stable to prove the structural stability of the matrix. Alternatively, we can take $\tilde{v}=\mathds{1}$ and obtain %
\begin{equation}
  \tilde{v}^T\begin{bmatrix}
-(\gamma_i+k_{ir}) & k_{rs}\\
 k_{ir} & -(\gamma_r+k_{rs})
  \end{bmatrix}=\begin{bmatrix}
-\gamma_i & -\gamma_r
  \end{bmatrix}<0
\end{equation}
from which the same result follows.

\subsection{Example 2 - Circadian Clock}

We consider the circadian clock-model of \cite{Vilar:02} which is described by the matrices
  \begin{equation}
  A = \begin{bmatrix}
    -\delta_{M_A} & 0 & 0 & 0 & 0\\
    \beta_A & -\delta_A & 0 & 0 & 0\\
    0 & 0 & -\delta_{M_R} & 0 & 0\\
    0 & 0 & \beta_R & -\delta_R & \delta_A\\
    0 & 0 & 0 & 0 & -\delta_A
  \end{bmatrix},\ S_b=\begin{bmatrix}
0\\
-1\\
0\\
-1\\
1
  \end{bmatrix}
\end{equation}
where all the parameters are positive. Even if initially the network consists of 9 species and 16 reactions, the problem can be reduced to the above problem involving 5 species (the activator protein $A$ and its corresponding mRNA $M_A$, the activator protein $R$ and its corresponding mRNA $M_R$ and a dimer $C$ composed of an activator protein $A$ and a repressor protein $R$). There are 8 reactions: 7 unimolecular reactions including 4 degradation reactions  (with rates $\delta_{M_A},\delta_{A},\delta_{M_R}$ and $\delta_{R}$), 2 catalytic reactions (with rates $\beta_{A}$ and $\beta_{R}$) and one conversion reaction with rate $\delta_A$ (with rate identical to the degradation rate of the activator protein $A$), and one bimolecular reaction corresponding to the binding reaction of $A$ and $R$.

As in the previous example, the condition reduces to
\begin{equation}
  \tilde{v}^TS_b^\bot A<0\Leftrightarrow \tilde{v}^T\begin{bmatrix}
-\delta_{M_A} & 0 & 0 & 0\\
\beta_A & -\delta_A & 0 &0\\
0 & 0 & -\delta_{M_R} & 0\\
0 & 0 & \beta_R & -\delta_R
  \end{bmatrix}<0
\end{equation}
where $\tilde{v}>0$. Clearly, we have four degradation reactions and two catalytic ones. Using the last statement of Theorem \ref{th:structural} we get that the matrix $A_{\mathds{1}}=-I$. We also have in this case that
\begin{equation}
  W_{ct}=\begin{bmatrix}
    1 & 0 & 0 & 0\\
    0 & 0 & 1 & 0
  \end{bmatrix}\quad\textnormal{and}\quad S_{ct}=\begin{bmatrix}
    0 & 1 & 0 & 0\\
    0 & 0 & 0 & 1
  \end{bmatrix}^T
\end{equation}
and, hence, $W_{ct}A_{\mathds{1}}^{-1}S_{ct}=0$. Hence, the system is structurally stable. Alternatively, the triangular structure of the matrix would also lead to the same conclusion.

\subsection{Example 3 - Toy model}

Let us consider here the following toy network where
\begin{equation}
  A=\begin{bmatrix}
-(\gamma_1+\alpha_1) & 0 & k_1\\
k_2 & -(\gamma_2+\alpha_2) & 0\\
0 & k_3 & -k_1
  \end{bmatrix}.
\end{equation}
Assume that $\alpha_1=k_2$ and $\alpha_2=k_3$. In such a case, the network consists of 5 unimolecular reactions: 2 degradation reactions with rates $\gamma_1$ and $\gamma_2$ and 3 conversion reactions with rates $k_1$, $k_2$ and $k_3$. Then, we get that
\begin{equation}
  A_{\mathds{1}}=\begin{bmatrix}
-2 & 0 & 1\\
1 & -2 & 0\\
0 & 1 & -1
  \end{bmatrix}
\end{equation}
is Hurwitz stable and hence that the matrix is structurally stable. However, if we assume now that $\alpha_1=\alpha_2=0$, then the network still consists of 5 unimolecular reactions but now we have 2 degradation reactions with rates $\gamma_1$ and $\gamma_2$, 1 conversion reaction with rate $k_1$ and 2 catalytic reactions with rates $k_2$ and $k_3$. In such a case, we get
\begin{equation}
  A_{\mathds{1}}=\begin{bmatrix}
-1 & 0 & 1\\
0 & -1 & 0\\
0 & 0 & -1
  \end{bmatrix}\quad \textnormal{and}\quad A_{\mathds{1}}^{-1}=\begin{bmatrix}
-1 & 0 & -1\\
0 & -1 & 0\\
0 & 0 & -1
  \end{bmatrix}
\end{equation}
where $A_{\mathds{1}}$ is Hurwitz stable. We have in this case that
\begin{equation}
  W_{ct}=\begin{bmatrix}
    1 & 0 & 0\\
    0 &  1 & 0
  \end{bmatrix}\quad\textnormal{and}\quad S_{ct}=\begin{bmatrix}
    0 & 1  & 0\\
    0 & 0 &  1
  \end{bmatrix}^T
\end{equation}
and hence
\begin{equation}
  W_{ct}A_{\mathds{1}}^{-1}S_{ct}=\begin{bmatrix}
    0 & 1\\
    1 & 0
  \end{bmatrix}
\end{equation}
which has a spectral radius equal to 1. Hence, the matrix is not structurally stable. Define now
\begin{equation}
  A^+(k_1)=\begin{bmatrix}
    -\gamma_1^- & 0 & k_1\\
    k_2^+ & -\gamma_2^- & 0\\
    0 & k_3^+ & -k_1
  \end{bmatrix}.
\end{equation}
Using a perturbation argument, we can prove that the 0-eigenvalue of $A^+(0)$ locally bifurcates to the open left half-plane for some sufficiently small $k_1>0$ if and only if $k_2^+k_3^+-\gamma_1^-\gamma_2^-<0$. Hence, there exists a $k_1>0$ such that $A^+(k_1)$ is Hurwitz stable if and only if $k_2^+k_3^+-\gamma_1^-\gamma_2^-<0$. Noting now that
\begin{equation}
  \det(A^+(k_1))=k_1(k_2^+k_3^+-\gamma_1^-\gamma_2^-)<0
\end{equation}
and, hence, the determinant never switches sign, which proves that the matrix $A^+(k_1)$ is structurally stable.

\section{Conclusion}

Several conditions have been proposed for checking the robust and the structural ergodicity of unimolecular and a certain class of bimolecular reaction networks. When certain conditions are met, the conditions can be verified using convex programming tools and/or through the computation of simple algebraic quantities. These results complement those of \cite{Briat:16cdc} by providing a more accurate set of conditions. Yet, improvements are still possible at the level of Theorem \ref{th:uni_rob} and Theorem \ref{th:structural} in order to avoid the use of complex numerical tools or to remove the need for simplifying assumptions.

{
\bibliographystyle{IEEEtran}

}

\end{document}